\newtheorem{theorem}{Theorem}
\newtheorem{case}{Case}
\newtheorem{definition}[theorem]{Definition}
\newtheorem{lemma}[theorem]{Lemma}
\newtheorem{remark}[theorem]{Remark}
\newenvironment{proof}[1][Proof]{\textbf{#1.} }{\ \rule{0.5em}{0.5em}}
\newcommand{\R}{\mathbb{R}}
\newcommand{\T}{\mathbb{T}}
\newcommand{\A}{\mathbb{A}}
\newcommand{\Z}{\mathbb{Z}}
\newcommand{\eps}{\varepsilon}
\newcommand{\cA}{\mathcal{A}}
\newcommand{\correction}[2]{#2}
\newcommand{\cor}[1]{#1}
\newcommand\corTypo[1]{#1}
\newcommand{\mymarginpar}[1]{}
\begin{document}

\begin{frontmatter}

%% Title, authors and addresses

%% use the tnoteref command within \title for footnotes;
%% use the tnotetext command for theassociated footnote;
%% use the fnref command within \author or \address for footnotes;
%% use the fntext command for theassociated footnote;
%% use the corref command within \author for corresponding author footnotes;
%% use the cortext command for theassociated footnote;
%% use the ead command for the email address,
%% and the form \ead[url] for the home page:
%% \title{Title\tnoteref{label1}}
%% \tnotetext[label1]{}
%% \author{Name\corref{cor1}\fnref{label2}}
%% \ead{email address}
%% \ead[url]{home page}
%% \fntext[label2]{}
%% \cortext[cor1]{}
%% \address{Address\fnref{label3}}
%% \fntext[label3]{}

\title{%Computer Assisted Proofs of Diffusion Using Topological Shadowing
%Computer Assisted Proofs of Diffusion by Means of Topological Shadowing and the Scattering Map Theory
%Computer Assisted Proofs of Diffusion by Scattering Maps
Computer Assisted Proof of Drift Orbits Along Normally Hyperbolic Manifolds%Diffusion by Scattering Map Theory
}

%% use optional labels to link authors explicitly to addresses:
%% \author[label1,label2]{}
%% \address[label1]{}
%% \address[label2]{}

\author[mjc]{Maciej J. Capi\'nski \corref{cor}\fnref{mjcA}}
\address[mjc]{Faculty of Applied Mathematics, AGH University of Science and Technology, al. Mickiewicza 30, 30-059 Krak\'ow, Poland.}
\cortext[cor]{Corresponding author.}
\fntext[mjcA]{Partially supported by the NCN grant 2018/29/B/ST1/00109. The work has been conducted during
the visit to FAU sponsored by the Fulbright Foundation.}

\author[jg]{Jorge Gonzalez\fnref{jgA}} 
\address[jg]{School of Mathematics, Georgia Institute of Technology, 686 Cherry Street, Atlanta, GA, 30332, USA.}
\fntext[jgA]{Partially supported by NSF grant MSPRF DMS-2001758.}

\author[jpm]{Jean-Pierre Marco}
\address[jpm]{Institut de Math\'{e}matiques, Analyse algx\'{e}brique, Universit\'{e} Pierre
et Marie Curie, 175 rue du Chevaleret, 75013 Paris, France.}

\author[jdmj]{J.D. Mireles James\fnref{jdmjA}}
\address[jdmj]{Department of Mathematical Sciences, Florida Atlantic University, 777 Glades Road, Boca Raton, FL 33431, USA. }
\fntext[jdmjA]{Partially supported by NSF grant DMS 1813501.}

\begin{abstract}
%Combining scattering map theory with topological shadowing 
Normally hyperbolic invariant manifolds theory provides an efficient tool for proving diffusion in dynamical systems. 
In this paper we develop a methodology for computer assisted proofs of diffusion in a-priori chaotic systems based on 
%scattering maps and topology. 
this approach.
We devise a method, which allows us to validate the needed conditions in a finite number of steps, which can be performed by a computer by means of rigorous-interval-arithmetic computations. We apply our method to the generalized standard map, obtaining diffusion over an explicit range of actions. 
\end{abstract}

\begin{keyword}
%% keywords here, in the form: keyword \sep keyword
Normally hyperbolic manifold \sep Arnold diffusion \sep scattering map \sep topological shadowing \sep computer assisted proof %\sep invariant manifold
%% PACS codes here, in the form: \PACS code \sep code

%% MSC codes here, in the form: 
\MSC[2010] 37J25 \sep 37J40
%% or \MSC[2008] code \sep code (2000 is the default)

\end{keyword}

\end{frontmatter}

%% \linenumbers

%% main text

\section{Introduction}
\correction{comment 1}{}

One of the fundamental problems of classical mechanics is to understand 
the dynamics of perturbations of completely integrable systems.
Indeed the solar system itself 
can be viewed as a system of weakly coupled (completely integrable)
two body problems, and the question of \correction{comment 2}{its }stability has captivated mathematicians 
since the days of Newton.
To formalize the discussion
\correction{comment 9}{let $\T = \R \mod 2\pi$}, let $\A^n=T^*\T^n=\T^n\times\R^n$ denote the annulus 
with the angle-action coordinates $(\theta,r)$, 
endowed with the symplectic\footnote{We refer to \cite{LM} for a comprehensive presentation of symplectic geometry} form $\corTypo{\omega}=\sum_{i=1}^ndr_i\wedge d \theta_i$.
Consider a Hamiltonian of the form
\begin{equation}\label{eq:hamform}
H(\theta,r)=h(r)+f(\theta,r),
\end{equation}
where $f$ is small in some suitable function space (analytic, $C^\infty$, $C^\kappa$, etcetera). 
The Hamiltonian equations of motion generated by
$H$ are
\begin{align*}
\dot \theta_i &=\partial_{r_i}H(\theta,r)=\partial_{r_i}h(r)+\partial_{r_i}f(\theta,r)\\
\dot r_i & =-\partial_{\theta_i}H(\theta,r)=-\partial_{\theta_i}f(\theta,r).
\end{align*}
We say that the system is completely integrable when 
$f \equiv 0$, as all the orbits move with constant 
velocity on invariant tori. 

When $f$ is small, the evolution of the action variables $r_i$ are ``slow''.
The fact that this evolution is ``extremely slow'' emerged from averaging methods 
originally developed by  Lagrange and Laplace,  furthered by Poincar\'e and Birkhoff,
culminating in the work of
Littlewood  \cite{Littlewood_1959} and in 
the major achievements of Nekhoroshev  \cite{Nekhoroshev_1977}. 
Thanks to the work of these and many subsequent authors,
it is now well-known that if \correction{comment 3}{$h$} is 
strictly convex and analytic, then the drift in the action variables cannot 
exceed a variation of $O(\eps^{1/2n})$ during an 
$O\big(\exp (1/\eps)^{1/2(n-2)}\big)$-long  time.
Here $\eps$ measures the size of the perturbation function $f$.

Examining the problem from another direction,
%dating back to Poincar\'{e}, 
Kolmogorov 
\cite{MR0068687} proved the first results on ``perpetual stability'' of solutions of 
analytic systems~\eqref{eq:hamform}. Kolmogorov's approach is geometric in 
essence: he proves that
-- provided \correction{comment 4}{that $f$ is analytic, } that the frequency vector $\nabla h(r^0)$ is 
Diophantine\correction{}{, and that certain non-degeneracy conditions on $h$ hold}
-- then the integrable invariant tori of the form 
$\T^n\times\{r^0\}$
persist and are only slightly deformed when the perturbation $f$ is added 
to the system.  By Diophantine we mean that there are constants 
$\gamma>0$ and $\tau>0$ such that
\[
\vert k\cdot \nabla h(r^0)\vert \geq \frac{\gamma}{\Vert k\Vert^\tau},\qquad \forall k\in\Z^n\setminus\{0\}.
\]
Arnold and Moser then added their own contributions
to this initial result, giving rise to what is now known as the KAM theory
\cite{MR0163025,MR0208078,MR0170705,MR1345153}.  See also
\cite{MR1858536} for much more complete discussion of the KAM theory and 
and its development.

\correction{comment 5}{Taken together, the KAM and averaging theories 
provide indispensable information about perturbations of  
integrable Hamiltonian systems.
The KAM theory says that 
\textit{some} orbits remain close to orbits of the unperturbed system
for all time (the KAM tori), while the averaging theory says that
\textit{all} orbits stay close to the unperturbed level sets of the Hamiltonian
for exponentially long times. 
A natural question is to ask \textit{do there exist orbits whose Hamiltonian
increases (or decreases) by an ``arbitrarily'' large  amount,
on a long enough time scale?}
%Indeed, when $n\geq 3$, a KAM torus has a connected 
%complement, and even the existence of a family of KAM tori (whose 
%complement has  $O(\sqrt\eps)$ relative measure in the level set) does not 
%obstruct the drift of trajectories on very long timescales.
}

The first explicit example actually exhibiting this phenomenon was given by Arnold in \cite{Ar}, 
and had the form: 
\begin{equation}\label{eq:hamform2}
H_{\eps}(\theta,r)=r_0+\tfrac{1}{2} (r_1^2+r_2^2)+\mu\cos \theta_2+\eps g(\theta,r),\quad 
\theta\in\T^3,\quad r\in\R^3,
\end{equation}
where $g$ is an explicit fixed
trigonometric polynomial, and $\mu$ and $\eps$ are independent parameters.
The example has several important special properties, such as:
\begin{itemize}
\item When $\mu=\eps=0$, the system reduces to $h$ and is completely integrable 
in angle-action form.
\item When  $\mu>0$ and $\eps=0$, the system $H_0$ is Liouville-integrable. 
In particular, it admits a normally hyperbolic (and symplectic) invariant annulus
$\cA_0=\A^2\times\{O\}$, where $O=(0,0)\in\A$ is the hyperbolic fixed point of the pendulum
$\tfrac{1}{2} r_2^2+\mu\cos \theta_2$. The stable and unstable manifolds of $\cA_0$ take the  form
$W^\pm(\cA_0)=\A^2\times W^\pm(O)$. The Hamiltonian flow in restriction to $\cA_0$ is 
completely integrable, in the sense that it admits a foliation by the Lagrangian
(for the induced structure) invariant  tori 
$\big(\T^2\times\{(r_0,r_1)\}\big)_{(r_0,r_1)\in\R^2}$.
\item For fixed $\mu$ and small enough \corTypo{$\varepsilon$ }
($\eps$ has to be exponentially small w.r.t. $\mu$ in Arnold's example), 
 the annulus $\cA_0$ is only slightly deformed and gives rise to a $4$-dimensional 
 normally hyperbolic (symplectic) invariant  annulus $\cA_\eps$ close to $\cA_0$, with 
 a rich homoclinic structure, while the Hamiltonian flow
on $\cA_\eps$ is close to completely 
integrable. 
\end{itemize}

It is important to stress that 
the perturbation $g$ is carefully chosen in Arnold's example, so that the annulus $\cA_0$
is still invariant when $\eps>0$ \correction{comment 6}{and the dynamics on $\cA_0$ remains unchanged. Moreover, the perturbation does not depend on the action variable $r$ and the transversal intersection of stable and unstable manifolds can be proved for all invariant circles on $\cA_0$.} By exploiting these facts 
Arnold was able to show that for $\mu, \varepsilon >0$ small enough,
$H_\eps$  admits a solution 
$\gamma_\eps(t)=\big(\theta(t),r(t)\big)$ which drifts of
order $1$ in action for  
suitable (very large) $T_{\mu,\eps}$.  That is
\[
r_1(0)<0, \qquad r_1(\cor{T_{\mu,\eps}})>1,
\]
for this orbit.  This provided the first explicit example 
where orbits of the perturbed system 
``diffuse'' as far and as fast
\footnote{The fact that the speed of Arnold diffusion coincides with the prediction
of averaging theory was indeed proved much later, see 
\cite{MR1986314,MR2152466,MR2739833,MR3670012}\corTypo{.}}
from the unperturbed level sets as allowed by averaging theory.  

The use of two independent 
parameters % (a method originally introduced by Poincar\'e) 
in Arnold's example
simplifies a lot the study: \eqref{eq:hamform2} is to be compared with \eqref{eq:hamform},
where the size of $f$ is the {\em only} available parameter.  Nevertheless, Arnold's example
became a jumping off point for a large body of work. By now this is a thriving industry and
it is known that diffusion occurs under a wide variety of hypotheses.  

Another (deeper) question raised by Arnold is the case where the \correction{comment 7}{unperturbed }Hamiltonian 
is completely integrable and in action-angle form (the famous ``fundamental problem of dynamics'' 
of Poincar\'e). Given a Hamiltonian system $h$ which depends only on the actions,
does there  exist a large (residual) set of perturbations $g$ such that
orbits diffuse in the previous fashion - or even visit any prescribed collection of open subsets
of an energy level?
It turns out that this question is extremely delicate,
and there are still many important open problems in this active area 
of research.  
The present discussion is by no means intended as a literature 
review of the field, we refer to \cite{MR3646879} for a very nice result in any dimension, together with 
relevant references.

%and we refer the interested 
%reader to the works of \cite{MR2460801,MR4037204,MR4028266,MR4160091}
%for much more complete discussion of the relevant references.

%Arnold's example was derived in order to produce a perturbation of a 
%completely integrable Hamiltonian system $h$ -- where $h$ depends
% only on the actions -- for 
%which orbits can be shown to diffuses in the perturbation. 

Another line of study
comes from weakening the hypothesis that the unperturbed system is completely integrable.
 Consider for example systems of the form \eqref{eq:hamform2}, in 
which the parameter $\mu$ is fixed but not small (say $\mu=1$). Such systems are  
referred to as {\em a priori} unstable, since they already admit hyperbolic invariant objects when 
$\eps = 0$.  The main difficulty in studying {\em a priori} unstable systems is their ``singular character''
or lack of transversality, 
coming from the fact that the manifolds $W^\pm(\cA_0)$ coincide when $\eps=0$. Detecting 
homoclinic intersections in such systems
for generic $g$ when $\eps\neq 0$ is far from trivial and requires new ingredients from variational
methods, weak KAM theory (both in the convex case) or symplectic topology in the general case.

This complication motivated the introduction of a still less degenerate class of 
examples, for which $W^\pm(\cA_0)$ transversely intersect even in the 
case $\eps=0$. This class of systems is known as 
{\em a priori} chaotic, and is the main topic of the present work
(see \cite{GT} and \cite{Ma1} for examples in this category 
closely related to Arnold's).  
Studying such systems is simpler, which leaves open the possibility of asking new and
more quantitative questions, e.g. what is the threshold in $\eps$ under which diffusion 
phenomenons can appear, or, what is the
maximal length of diffusive trajectories? These questions require new 
methods, and it turns out that in realistic physical systems the relevant quantities to 
estimate are difficult to compute.  Our
aim is to provide an explicit example illustrating the relevance of 
computer-assisted methods of proof in such problems.

To simplify the construction we shift our focus to 
symplectic maps instead of Hamiltonian vector fields. 
This reduction is natural, since taking a Poincar\'{e} section
in an energy manifold results in a symplectic 
diffeomorphism.  The main example of the paper is
the  family of symplectic diffeomorphisms 
$f_{\varepsilon} \colon \R^2\times\T^2 \to \R^2\times\T^2$ defined by
\begin{equation}
f_{\varepsilon}\left(  x,y,\theta,I\right)  =\left(
\begin{array}
[c]{l}%
x+y+\alpha\sin\left(  x\right) \\
y+\alpha\sin\left(  x\right) \\
\theta+I\\
I
\end{array}
\right)  +\varepsilon\left(
\begin{array}
[c]{c}%
\cos(x)\sin(\theta)\\
\cos(x)\sin(\theta)\\
\sin(x)\cos(\theta)\\
\sin(x)\cos(\theta)
\end{array}
\right)\corTypo{,}   \label{eq:example}%
\end{equation}
where $(x,y)\in\R^2$ and $(\theta,I)\in \T^2$. Observe that the map $f_\eps$ 
can be seen as a perturbation of a standard map (variables $(x,y)$) coupled to
an $I$-parametrized rotation on $\T^2$ (variables $(\theta,I)$).  
Indeed, when $\varepsilon =0 $ the two systems do not interact and the dynamics is 
a product.  

\begin{figure}
\begin{center}
\includegraphics[width=7cm]{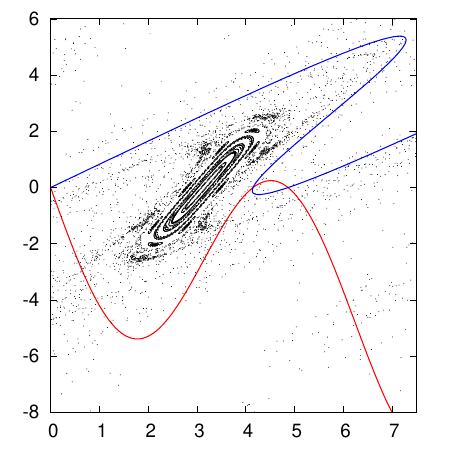}
\end{center}
\caption{Phase space structure of the Chirikov Standard Map when $\alpha=4.$
Black dots indicate the dynamics of a number of ``typical'' orbits. The stable
and unstable manifolds of the fixed point at the origin are depicted by the
red and blue curves respectively.}%
\label{fig:phaseSpace}%
\end{figure}

Of particular interest, the 
standard map has a hyperbolic fixed point
at the origin $O$ in $\R^2$.
In the present work we do not treat $\alpha$ as a perturbation parameter,
and will show that for %some 
fixed $\alpha\cor{=4}$ the stable and unstable manifolds 
intersect transversely at some point $P$\correction{comment 8}{; see Figure \ref{fig:phaseSpace}} (so that the parameter $\alpha$ plays the role
of $\mu$ in Arnold's example). Consequently, 
$f_0$ admits an invariant
torus $\{O\}\times\T^2$, which is readily seen to be normally hyperbolic, 
and whose stable and unstable manifolds intersect transversely
along a homoclinic torus $\{P\}\times\T^2$. By the Birkhoff-Smale theorem, 
a large enough iterate of the standard map admits a 
horseshoe (homeomorphic to $\{0,1\}^\Z$ endowed with the product topology) near the origin. 
Consequently, for $N$ large enough,  the coupling $f_0^N$ admits a fibered horseshoe, close to $\{0\}\times\T^2$ and homeomorphic to 
$\{0,1\}^\Z\times\T^2$, on which it induces a fiber-preserving dynamics. 
This problem was formalized in \cite{Ma2}.

When $\eps>0$, the preservation of the \corTypo{fibres }is broken, and nothing 
prevents the orbits from drifting along the base $\T^2$ in
the $I$ direction.   
In this paper we use constructive computer assisted arguments to prove that such drift orbits do 
indeed exist for $f_\varepsilon$, and that they have lengths independent of the size $\eps$ of the perturbation.
This makes the system a significant example in the 
\textit{a-priori} chaotic case.  Moreover, the present work provides a self contained exposition of 
 constructive computer assisted methods for proving the existence of diffusion phenomena
in explicit examples.

Our results are based on shadowing theorems for scattering maps worked out in \cite{GLS}. 
A scattering map is a function from a normally hyperbolic invariant manifold to itself, defined 
through appropriate intersections of \corTypo{fibres }of its stable and unstable manifolds. In \cite{GLS} 
it is shown that pseudo orbits resulting from iterations of scattering maps are shadowed 
by true orbits of the system.  We use this method in our main results, which are contained in 
Theorems \ref{th:main}, \ref{cor:strip}, \ref{cor:strip_down} and \ref{th:shadowing-seq}. 
Theorems \ref{th:main}, \ref{cor:strip}, \ref{cor:strip_down} establish orbits which diffuse 
over an explicit interval of actions.  
Theorem \ref{th:shadowing-seq} establishes orbits which shadow sequences of actions, 
chosen from the interval.  The aim of this paper is to provide tools which can be used to obtain 
computer assisted proofs. To check the hypotheses of our theorems one needs to compute the scattering 
maps of the unperturbed system, and to check certain explicit inequalities which measure the 
influence of the perturbation on the action. This influence is computed by considering finite 
fragments of homoclinic orbits. 
%Our assumptions can be checked by preforming a finite number of computations, which can be computer assisted. 
%We show two computer-assisted methods with which the scattering map can be computed: 
%by using cones or the parameterization method.
%Once the scattering maps are established, the rest of the assumptions follow by computing explicit sums along homoclinic orbits.
We apply our results to give a computer-assisted proof of diffusion for the system 
given by Equation \eqref{eq:example}. 
In a forthcoming paper we plan an application to the Planar Restricted Three Body Problem, 
with mass parameters of the Jupiter-Sun system.

An alternative approach for computer assisted proof of diffusion is given in \cite{CG}. \corTypo{The work just cited }is based on the method of correctly aligned windows. The difference compared to \corTypo{the present work }is that  \cite{CG} requires an explicit construction of `connecting sequences' of windows. 
These windows are then used for shadowing arguments. Here we establish transversal 
intersections of stable/unstable manifolds leading to scattering maps, and check our conditions 
along homoclinic orbits. The shadowing is automatically ensured by \cite{GLS}.

The remainder of the paper is organized as follows.  In Section \ref{sec:prel} we
 review some preliminary 
information about normally hyperbolic invariant manifolds, scattering maps, and 
the interval Newton method.  
In Section \ref{sec:main} we lay out our main theoretical results, namely the constructive 
hypothesis which are used to establish Arnold diffusion in explicit examples.
%Section 4 deals with a technical adjustment needed to deal with non-compact NHIMs, and 
Section \ref{sec:example} applies the method to the example system.
%Section \ref{sec:manifolds} is a technical treatment of constructive methods for 
%studying the stable/unstable manifolds fo fixed points of maps.
Proofs of some of the theorems and lemmas are relegated to the Appendices.

%TCIDATA{Version=5.00.0.2606}
%TCIDATA{LaTeXparent=0,0,MMFedit.tex}

\section{Preliminaries\label{sec:prel}}

Throughout the paper, for $x\in \mathbb{R}^{n}$, by $\left\Vert x\right\Vert 
$ we shall mean the Euclidean norm. %We use $\mathbb{T}=\mathbb{R}\mod 2\pi,$ to stand for a one dimensional torus and $\mathbb{T}^{k}$ to stand for a $k$-dimensional torus. 
For a set $A$ in a topological space we shall write $%
\overline{A}$ to denote its closure.

\subsection{Normally hyperbolic invariant manifolds\label{sec:nhim}}

In this section we recall the notion of a normally hyperbolic invariant
manifold and state the main result concerning its persistence under small
perturbation. A classic reference for this material is \cite{MR292101,MR287106}.

\begin{definition}
\label{def:nhim} 
\correction{correction 1}{Let $M$ be a smooth $n$-dimensional manifold, and let  $f : M \to M$} be a $C^r$ diffeomorphism, \correction{comment 10}{with $r> 1$}.
Let $\Lambda \subset \cor{M}$ be a compact manifold
without boundary, invariant under $f$, i.e., $f(\Lambda )=\Lambda $. We say that $\Lambda $ is a normally hyperbolic invariant
manifold (with symmetric rates) if there exists a constant $C>0,$ rates $%
0<\lambda <\mu ^{-1}<1$ and a $Tf$ invariant splitting for every $x\in
\Lambda $%
\begin{equation*}
\cor{T_x M}=E_{x}^{u}\oplus E_{x}^{s}\oplus T_{x}\Lambda \corTypo{,}
\end{equation*}%
such that%
\begin{align}
v& \in E_{x}^{u}\Leftrightarrow \left\Vert Df^{k}(x)v\right\Vert \leq
C\lambda ^{-k}\left\Vert v\right\Vert ,\quad k\leq 0,
\label{eq:rate-cond-nhim1} \\
v& \in E_{x}^{s}\Leftrightarrow \left\Vert Df^{k}(x)v\right\Vert \leq
C\lambda ^{k}\left\Vert v\right\Vert ,\quad k\geq 0,
\label{eq:rate-cond-nhim2} \\
v& \in T_{x}\Lambda \Rightarrow \left\Vert Df^{k}(x)v\right\Vert \leq C\mu
^{|k|}\left\Vert v\right\Vert ,\quad k\in \mathbb{Z}.
\label{eq:rate-cond-nhim3}
\end{align}
\end{definition}

Let $d\left( x,\Lambda \right) $ stand for the distance between a point $x$
and the manifold $\Lambda $. %, induced by the Euclidean norm. 
Given a normally
hyperbolic invariant manifold and a suitable small tubular neighbourhood $%
U\subset \cor{M}$ of $\Lambda $ one defines its local unstable and
local stable manifold \cite{MR292101} as%
\begin{align*}
W_{\Lambda }^{u}\left( f,U\right) & =\left\{ y\in \cor{M}\,|\,f^{k}(y)\in U,d\left( f^{k}(y),\Lambda \right) \leq C_{y}\lambda
^{\left\vert k\right\vert },\,k\leq 0\right\} , \\
W_{\Lambda }^{s}\left( f,U\right) & =\left\{ y\in \cor{M}\,|\,f^{k}(y)\in U,d\left( f^{k}(y),\Lambda \right) \leq C_{y}\lambda
^{k},\,k\geq 0\right\} ,
\end{align*}%
where $C_{y}$ is a positive constant, which can depend on $y$. We define the
(global) unstable and stable manifolds as%
\begin{equation*}
W_{\Lambda }^{u}\left( f\right) =\bigcup_{n\geq 0}f^{n}\left( W_{\Lambda
}^{u}\left( f,U\right) \right) ,\qquad W_{\Lambda }^{s}\left( f\right)
=\bigcup_{n\geq 0}f^{-n}\left( W_{\Lambda }^{s}\left( f,U\right) \right) .
\end{equation*}

The manifolds $W_{\Lambda }^{u}\left( f,U\right) $, $W_{\Lambda }^{s}\left(
f,U\right) $, $W_{\Lambda }^{u}\left( f\right) $ and $W_{\Lambda }^{s}\left(
f\right) $ are foliated by%
\begin{align*}
W_{x}^{u}\left( f,U\right) & =\left\{ y\in\cor{M}\,|\,f^{k}(y)\in
U,d(f^{k}(y),f^{k}(x))\leq C_{x,y}\lambda ^{\left\vert k\right\vert
},\,k\leq 0\right\} , \\
W_{x}^{s}\left( f,U\right) & =\left\{ y\in \cor{M}\,|\,f^{k}(y)\in
U,d(f^{k}(y),f^{k}(x))\leq C_{x,y}\lambda ^{k},\,k\geq 0\right\} ,
\end{align*}%
where $x\in \Lambda $ and $C_{x,y}$ is a positive constant, which can depend
on $x$ and $y$,%
\begin{equation*}
W_{x}^{u}\left( f\right) =\bigcup_{n\geq 0}f^{n}\left( W_{f^{-n}\left(
x\right) }^{u}\left( f,U\right) \right) ,\qquad W_{x}^{s}\left( f\right)
=\bigcup_{n\geq 0}f^{-n}\left( W_{f^{n}\left( x\right) }^{s}\left(
f,U\right) \right) .
\end{equation*}

Let%
\begin{equation}
l<\min \left\{ r,\frac{|\log \lambda |}{\log \mu }\right\} .
\label{eq:nhim-smoothness}
\end{equation}%
The manifold $\Lambda $ is $C^{l}$ smooth, the manifolds $W_{\Lambda
}^{u}\left( f\right) ,W_{\Lambda }^{s}\left( f\right) $ are $C^{l-1}$ and $%
W_{x}^{u}\left( f\right) $, $W_{x}^{s}\left( f\right) $ are $C^{r}$ \cite%
{DLS1}. Normally hyperbolic manifolds, as well as their stable and unstable manifolds and their fibres persist under small perturbations \cite{MR292101}.

%If $f^{\prime }$ is another $C^{r}$ diffeomorphism and $f^{\prime }$ is $C^{r}$ near $f$, then $f^{\prime }$ is normally hyperbolic at some unique $\Lambda ^{\prime }$, $C^{l}$ near $\Lambda $.

%\begin{lemma}
%\cite{DLS1} \label{lem:nhim-symplectic} In the case that the map $f$
%preserves a symplectic form $\omega $, the induced form $\omega |_{\Lambda }$
%is a symplectic and $f|_{\Lambda }$ preserves $\omega |_{\Lambda }$.
%\end{lemma}

\subsection{Shadowing of scattering maps}\label{sec:shadowing}

Our diffusion result is based on shadowing lemmas for scattering maps found
in \cite{GLS}, which we now summarize.

\cor{Let $(M,\omega)$ be a smooth symplectic manifold. }Let us assume that $\Lambda\cor{\subset M}$ is a normally hyperbolic invariant manifold for 
\correction{comment 25}{a $C^r$ symplectic map $f:M\to M$, where $r>1$. We assume that $\Lambda$ is even dimensional and symplectic with the symplectic form $\omega|_{\Lambda}$, then $f|_{\Lambda}$ is symplectic on $\Lambda$. }
We define two maps, 
\begin{align*}
\Omega_{+} & :W_{\Lambda}^{s}(f)\rightarrow\Lambda, \\
\Omega_{-} & :W_{\Lambda}^{u}\left( f\right) \rightarrow\Lambda,
\end{align*}
where $\Omega_{+}(x)=x_{+}$ iff $x\in W_{x_{+}}^{s}\left( f\right) $, and $%
\Omega_{-}(x)=x_{-}$ iff $x\in W_{x_{-}}^{u}\left( f\right) .$ These are
referred to as the wave maps\correction{comment 12}{.}

\begin{definition}
\label{def:homoclinic-channel} We say that a manifold $\Gamma \subset
W_{\Lambda }^{u}\left( f\right) \cap W_{\Lambda }^{s}\left( f\right) $ is a
homoclinic channel for $\Lambda $ if the following conditions hold:

\begin{itemize}
\item[(i)] \label{itm:homoclinic-channel-c1} for every $x\in \Gamma $ \correction{comment 13}{}
\begin{align}
T_{x}W_{\Lambda }^{s}\left( f\right) \cor{+} T_{x}W_{\Lambda }^{u}\left(
f\right) & =\cor{T_x M},  \label{eq:scatter-cond-1} \\
T_{x}W_{\Lambda }^{s}\left( f\right) \cap T_{x}W_{\Lambda }^{u}\left(
f\right) & =T_{x}\Gamma ,  \label{eq:scatter-cond-2}
\end{align}

\item[(ii)] \label{itm:homoclinic-channel-c2} the fibres of $\Lambda$
intersect $\Gamma$ transversally in the following sense%
\begin{align}
T_{x}\Gamma\oplus T_{x}W_{x_{+}}^{s}\left( f\right) & =T_{x}W_{\Lambda
}^{s}\left( f\right) ,  \label{eq:scatter-cond-3} \\
T_{x}\Gamma\oplus T_{x}W_{x_{-}}^{u}\left( f\right) & =T_{x}W_{\Lambda
}^{u}\left( f\right) ,  \label{eq:scatter-cond-4}
\end{align}
for every $x\in\Gamma$,

\item[(iii)] the wave maps $(\Omega_{\pm})_{\mid\Gamma}:\Gamma\rightarrow
\Lambda$ are diffeomorphisms onto their image.
\end{itemize}
\end{definition}

\begin{definition}\label{def:scattering-map} \correction{comment 14}{}
Assume that $\Gamma$ is a homoclinic channel for $\Lambda$ and let 
\begin{equation*}
\Omega_{\pm}^{\Gamma}:=\left( \Omega_{\pm}\right) |_{\Gamma}.
\end{equation*}
We define a scattering map $\sigma^{\Gamma}$ for the homoclinic channel $%
\Gamma$ as 
\begin{equation*}
\sigma^{\Gamma}:=\Omega_{+}^{\Gamma}\circ\left( \Omega_{-}^{\Gamma}\right)
^{-1}:\Omega_{-}^{\Gamma}\left( \Gamma\right) \rightarrow\Omega_{+}^{\Gamma
}\left( \Gamma\right) .
\end{equation*}
\end{definition}

\correction{comment 25}{We have the following symplectic property of the scattering map.
\begin{theorem}\label{th:symplectic} \cite{GLS} Assume that $M$ is endowed with a symplectic form $\omega$ and that $\omega |_{\Lambda}$ is also symplectic.
Assume that $f$ is symplectic.
Assume that there exists a homoclinic channel $\Gamma$ and so the scattering map $\sigma^{\Gamma}$ is well defined. Then, the scattering map $\sigma^{\gamma}$ is symplectic.
\end{theorem}}
We have the following theorem\corTypo{, which is the main tool which we use to obtain our results}.

\begin{theorem}
\cite{GLS}\label{th:shadowing} Assume that $f:\cor{M}\rightarrow 
\cor{M}$ is a sufficiently smooth map, $\Lambda\subset\cor{M}$
is a normally hyperbolic invariant manifold with stable and unstable
manifolds which intersect transversally along a homoclinic channel $\Gamma
\subset\cor{M},$ and $\sigma$ is the scattering map associated to $%
\Gamma$.

%Assume that $f$ preserves measure absolutely continuous with respect to the Lebesgue measure on $\Lambda $, and that $\sigma $ sends positive measure sets to positive measure sets.

Let $m_{1},\ldots ,m_{\cor{l}}\in \mathbb{N}$ be a fixed sequence of integers. Let 
$\left\{ x_{i}\right\} _{i=0,\ldots ,\cor{l}}$ be a finite pseudo-orbit in $%
\Lambda $, that is a sequence of points in $\Lambda $ of the form \correction{comment 15}{}
\begin{equation}
x_{i+1}=f^{m_{i}}\circ \sigma ^{\Gamma }\left( x_{i}\right) ,\qquad
i=0,\ldots ,\cor{l}-1,\,\cor{l}\geq 1.  \label{eq:pseudo-orbit}
\end{equation}%
%that is contained in some open set $\mathcal{U}\subset \Lambda $ with almost
%every point of $\mathcal{U}$ recurrent for $f|_{\Lambda }$. (The points $%
%\left\{ x_{i}\right\} _{i=0,\ldots ,\cor{l}}$ do not have to be themselves
%recurrent.)
Then for every $\delta>0$ there exists an orbit $\left\{ z_{i}\right\}
_{i=0,\ldots,\cor{l}}$ of $f$ in $\cor{M}$, with $z_{i+1}=f^{k_{i}}\left(
z_{i}\right) $ for some $k_{i}>0$, such that $d\left( z_{i},x_{i}\right)
<\delta$ for all $i=0,\ldots,\cor{l}$.
\end{theorem}

\begin{remark}\label{rem:th-simplified} \correction{comment 26}{
The original statement of Theorem \ref{th:shadowing} from \cite{GLS} does not require compactness of $\Lambda$.
The assumptions are that $f$ preserves measure absolutely continuous with respect to the Lebesgue measure on $\Lambda $, and that $\sigma $ sends positive measure sets to positive measure sets.
Moreover, it is assumed that the pseudo-orbit is contained in some open set $\mathcal{U}\subset \Lambda $ with almost every point of $\mathcal{U}$ recurrent for $f|_{\Lambda }$. In our case, since 
$\Lambda$ is compact and $ f|_{\Lambda}$ is symplectic, almost every point in $\Lambda$ is recurrent for $f|_{\Lambda}$, so we can take $\mathcal{U}=\Lambda$ and simplify the statement of the theorem.}
\end{remark}

\begin{remark}
In \cite{GLS} the statement of the theorem is for pseudo-orbits of the form $%
x_{i+1}=\sigma ^{\Gamma }\left( x_{i}\right) $. Here we shadow pseudo-orbits
of the form (\ref{eq:pseudo-orbit}), but this is the same result as that
from \cite{GLS} for the following reason.

The proof of the theorem in \cite{GLS} is based on a general shadowing lemma 
\cite[Lemma 3.1]{GLS} which ensures that given a pseudo-orbits of the form $%
y_{i+1}=f^{k_{i}}\circ \sigma ^{\Gamma }\circ f^{n_{i}}(y_{i})$ where the
numbers of iterates $k_{i}$, $n_{i}$ are big enough, we are able to find an
orbit of the form $z_{i+1}=f^{k_{i}+n_{i}}(z_{i})$, $\delta $-close to the
pseudo-orbit $y_{i}$.

%To prove 
The shadowing of a pseudo-orbit $x_{i+1}=\sigma ^{\Gamma }\left(x_{i}\right) $ is proven in \cite{GLS} by combining \cite[Lemma 3.1]{GLS} with recurrence. First, by using recurrence, a pseudo-orbit of the form $y_{i+1}=f^{k_{i}}\circ \sigma ^{\Gamma }\circ f^{n_{i}}(y_{i})$ is constructed close to the pseudo-orbit $x_{i+1}=\sigma ^{\Gamma }\left(x_{i}\right) $. The $k_{i}$, $n_{i}$ are chosen to be big enough to apply from \cite[Lemma 3.11]{GLS}. The true orbit, which follows from \cite[Lemma 3.11]{GLS}, shadows the pseudo orbit $y_i$, but since this lies close to $x_i$ one obtains the shadowing of the pseudo orbit $x_{i+1}=\sigma ^{\Gamma }\left(x_{i}\right)$.

The proof of the shadowing of a pseudo-orbit of the form (\ref{eq:pseudo-orbit}) follows from the same construction: One can use
recurrence to construct a pseudo-orbit of the form $y_{i+1}=f^{k_{i}}\circ
\sigma ^{\Gamma }\circ f^{n_{i}}(y_{i})$, so that $y_i$ are close to $x_i$ from (\ref{eq:pseudo-orbit}). The lemma \cite[Lemma 3.1]{GLS} ensures that $y_i$ can be shadowed by a true orbit. Since $y_i$ is close to the pseudo-orbit $x_i$ form (\ref{eq:pseudo-orbit}) we obtain the shadowing of (\ref{eq:pseudo-orbit}) by a true orbit.
\end{remark}

\begin{remark}\label{rem:finite-number-scatter}
The result can be immediately extended to the case where we have a finite
number of scattering maps $\sigma _{1},\ldots ,\sigma _{L}$ to shadow\mymarginpar{\hyperlink{Return_comment 15}{comment 15}}
\begin{equation*}
x_{i+1}=f^{m_{i}}\circ \sigma _{\alpha _{i}}\left( x_{i}\right) ,\qquad
i=0,\ldots ,\cor{l}-1,\,\cor{l}\geq 1,
\end{equation*}%
for two prescribed sequences \correction{comment 16}{$m_{1},\ldots ,m_{\cor{l}}\in \mathbb{N}$ }and $\alpha
_{1},\ldots ,\alpha _{\cor{l}}\in \left\{ 1,\ldots ,L\right\} $; see \cite[Theorem
3.7]{GLS}.
\end{remark}

%\begin{remark} \label{rem:symplectic} If $f$ is symplectic for a symplectic form $\omega $, 
%$\Lambda $ is compact and $\omega |_{\Lambda }$ is not degenerate on $%
%\Lambda $ then $f|_{\Lambda }$ is measure-preserving. Hence, by the Poincar%
%\'{e} recurrence theorem almost every point of $\Lambda $ is recurrent. In
%such setting in Theorem \ref{th:shadowing} we can take $\mathcal{U}=\Lambda $.
%\end{remark}

%TCIDATA{Version=5.00.0.2606}
%TCIDATA{LaTeXparent=0,0,MMFedit.tex}

\section{Main results\label{sec:main}}

\label{sec:main-results}

Let $f_{0},g:\mathbb{R}^{2d}\times\mathbb{T}^{2}\rightarrow\mathbb{R}%
^{2d}\times\mathbb{T}^{2}$ and consider the following system 
\begin{equation*}
f_{\varepsilon}\left( u,s,I,\theta\right) =f_{0}(u,s,I,\theta)+\varepsilon
g\left( u,s,I,\theta\right) ,% \label{eq:f-epsilon-form}
\end{equation*}
where $u,s \in \mathbb{R}^d$, $\theta,I \in \mathbb{T}$. Assume that $%
f_{\varepsilon}$ are \correction{comment 20}{smooth }symplectic maps for a symplectic form \correction{comment 28}{$\omega= du \wedge ds + dI \wedge d\theta$, }assume
that for $\varepsilon=0$ 
\begin{equation*}
\Lambda_{0}=\left\{ \left( 0,0,I,\theta\right) :I,\theta\in\mathbb{T}%
^{1}\right\} \simeq \mathbb{T}^2\corTypo{,}
\end{equation*}
is a normally hyperbolic invariant manifold,
% for which $\omega|_{\Lambda_{0}}$ is non degenerate, 
and that $I$ is a constant of motion for the unperturbed
system, i.e. 
\begin{equation}
\pi_{I}f_{0}\left( x\right) =\pi_{I}x,  \label{eq:energy-preserved}
\end{equation}
for any $x\in\mathbb{R}^{2d}\times\mathbb{T}^{2}$, where $\pi_I(u,s,I,\theta)=I$. 

\correction{comment 20}{}
\cor{
\begin{remark} \label{rem:why-smooth}We assume smoothness of the maps since our main tool for the proof will be Theorem \ref{th:shadowing}, which requires sufficient smoothness.
\end{remark}}

\begin{remark}
\label{rem:cylinder} The assumption that $\Lambda _{0}$ is a torus
simplifies the arguments, as $\Lambda _{0}$ is compact without boundary and
the normally hyperbolic manifold theorem ensures that $\Lambda _{0}$ is
perturbed to a nearby compact normally \cor{hyperbolic }invariant manifold $\Lambda
_{\varepsilon }$. Having \cor{compactness }of $\Lambda _{\varepsilon }$ is
convenient, but not necessary \correction{comment 18}{if one applies non-compact versions of the normally hyperbolic theorem \cite{MR1445489,MR1686965,MR2439610,MR3098498}}. 
\end{remark}

Our objective is to provide conditions under which for any sufficiently
small $\varepsilon>0$ there exists a point $x_{\varepsilon}$ and a number of
iterates $n_{\varepsilon}$ for which 
\begin{equation}
\pi_{I}\left( f_{\varepsilon}^{n_{\varepsilon}}\left( x_{\varepsilon
}\right) -x_{\varepsilon}\right) >1.  \label{eq:diffusion-orbit-condition}
\end{equation}

The coordinates have the following roles. The $u,s$ are the coordinates on
unstable and stable bundles, respectively, of $\Lambda _{0}$. The $\theta \ $%
is an angle and $I$ plays the role of \corTypo{a constant }of motion for $\varepsilon
=0$. In the setting of action-angle coordinates, the $I$ would be chosen as
the action. We shall refer to $I$ as an `action', slightly abusing the
terminology. In this paper we restrict to the case where the angle and
action are one dimensional. We do so \corTypo{for the sake of }simplicity\footnote{%
We believe that our methods can be generalised to the higher dimensional
case. We make comments how to do so in Remarks \ref{rem:generalisation1}, %
\ref{rem:generalisation2} after the statements of our results. %We also make a cautionary Remark \ref{rem:generalisation3} regarding potential problems while extending the higher dimensional setting to the case of normally hyperbolic cylinders in \ref{sec:gluing}.
}.

A typical setting where our result \corTypo{applies }is that of a time
dependent perturbation of a Hamiltonian system of the form\correction{comment 29}{}
\begin{equation}
\cor{x^{\prime}=J\nabla_x H_{\varepsilon}(x,t)= J\nabla_x\left( H\left( x\right) +\varepsilon G(x,t)\right) ,}
\label{eq:Hamiltonian-form-of-problem}
\end{equation}
where $H:\mathbb{R}^{2d+2}\rightarrow\mathbb{\cor{R}}$, $G:\mathbb{R}
^{2d+2}\times\mathbb{T}^{1}\rightarrow\mathbb{\cor{R}}$ and\correction{comment 19}{} 
\begin{equation*}
J=\left( 
\begin{array}{ll}
0 & Id \\ 
-Id & 0%
\end{array}
\right) ,\qquad\text{for\qquad}Id=\left( 
\begin{array}{ll}
1 & 0 \\ 
0 & 1%
\end{array}
\right) .
\end{equation*}
In such case we can take $f_{\varepsilon}\left( x\right) =\Phi_{2\pi
}^{\varepsilon}\left( x,t_{0}\right) $, for some $t_{0}\in\lbrack0,2\pi)$,
where $\Phi_{t}^{\varepsilon}\left( x_{0},t_{0}\right) $ stands for the time 
$t$ flow induced by (\ref{eq:Hamiltonian-form-of-problem}) with the initial
condition $\left( x_{0},t_{0}\right) $.% If the unperturbed system admits a normally hyperbolic invariant cylinder, then we are in the setting from Remark \ref{rem:cylinder}.

Another possibility is to consider the flow induced by (\ref%
{eq:Hamiltonian-form-of-problem}) in the extended phase space and consider a
section of the form $\Sigma \times \mathbb{T}^1$ in $\mathbb{R}^{2d+2}\times 
\mathbb{T}^1$. Then $f_{\varepsilon}$ can then be chosen as the
section-to-section map along the flow in the extended phases space. The time
coordinate plays the role of the angle $\theta$ and we choose $I$ as the
Hamiltonian \correction{comment 19}{$H$ }of the unperturbed system.

%In such case the coordinates $x=\left( u,s,I,\theta\right) $ would have the following meaning. The $I,\theta$ would be the action angle coordinates and $u$, $s$ the unstable and stable coordinate, respectively, of the unperturbed system.

\begin{remark}
\label{rem:cylinder}
A typical setting is when $\Lambda _{0}$ is a normally hyperbolic
invariant cylinder (possibly with a boundary) with $\theta \in \mathbb{T}^{1}
$ and $I\in \mathbb{R}$. We would then have $f_{\varepsilon }:\mathbb{R}%
^{2d}\times \mathbb{R}\times \mathbb{T}\rightarrow \mathbb{R}^{2d}\times 
\mathbb{R}\times \mathbb{T}$. In such case we can consider $I\in \left[ 0,1\right] $
and artificially `glue' the system so that $I$ is in $\mathbb{T}^{1}$ to
apply our result directly\cor{, provided that certain conditions are met. }\mymarginpar{\hyperlink{Return_comment 18}{comment 18}}
%on $\Lambda_{\varepsilon}$ resulting from the `glued' system we have invariant KAM tori}. 
Details of how this can be done are found in \ref%
{sec:gluing}. 
\end{remark}

The next theorem is our first main result. It provides conditions for the
existence of orbits which diffuse in $I$.

\begin{figure}[ptb]
\begin{center}
\includegraphics[height=2.8cm]{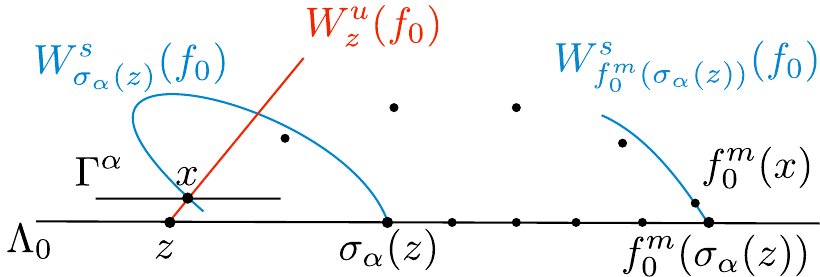}
\end{center}
\caption{The setting for Theorem \protect\ref{th:main}. }
\label{fig:main-thm}
\end{figure}

\begin{theorem}
\label{th:main} 
%Assume that there is a neighborhood $U$ of $\Lambda_{0}$ and positive constants $L_{g},C,\lambda$, where $\lambda\in (0,1)$, such that for every $x_{1},x_{2} \in U,$ and every $z\in\Lambda_{0}$, $x_{u}\in W_{z}^{u}(f_{0},U)$ and $x_{s}\in W_{z}^{s}(f_{0},U)$ we have%
%\begin{equation}
%\left\vert \pi_{I}\left( g(x_{1})-g\left( x_{2}\right) \right) \right\vert \leq L_{g}\left\Vert x_{1}-x_{2}\right\Vert ,  \label{eq:Lip-g-assumption}
%\end{equation}%
%\begin{equation}
%\begin{array}{lll}
%\left\Vert f_{0}^{n}\left( z\right) -f_{0}^{n}\left( x_{u}\right)\right\Vert <C\lambda^{\left\vert n\right\vert } & \qquad & \text{for all }n\leq0,\medskip \\ 
%\left\Vert f_{0}^{n}\left( z\right) -f_{0}^{n}\left( x_{s}\right)\right\Vert <C\lambda^{n} &  & \text{for all }n\geq0.%
%\end{array}
%\label{eq:contraction-expansion-bounds}
%\end{equation}
\cor{Assume that there is a neighborhood $U$ of
$\Lambda_{0}$ and a positive constant $L_{g}$ such that for every $z\in\Lambda_{0}$ and every $x_{u}\in W_{z}^{u}(f_{0},U),$ $x_{s}\in W_{z}^{s}(f_{0},U),$ 
\begin{equation}
\begin{array}
[c]{c}%
\left\vert \pi_{I}\left(  g(0,x_{u})-g\left(  0,z\right)  \right)
\right\vert \leq L_{g}\left\Vert x_{u}-z\right\Vert ,\medskip\\
\left\vert \pi_{I}\left(  g(0,x_{s})-g\left(  0,z\right)  \right)
\right\vert \leq L_{g}\left\Vert x_{s}-z\right\Vert.
\end{array}
\label{eq:Lip-g-assumption}%
\end{equation}
Assume also that there exist positive constants $C,\lambda$, such that $\lambda
\in(0,1)$ and for every $z\in\Lambda_{0}$ and every $x_{u}\in W_{z}^{u}(f_{0}%
,U),x_{s}\in W_{z}^{s}(f_{0},U)$ we have
\begin{equation}%
\begin{array}
[c]{lll}%
\left\Vert f_{0}^{n}\left(  z\right)  -f_{0}^{n}\left(  x_{u}\right)
\right\Vert <C\lambda^{\left\vert n\right\vert } & \qquad & \text{for all
}n\leq0,\medskip\\
\left\Vert f_{0}^{n}\left(  z\right)  -f_{0}^{n}\left(  x_{s}\right)
\right\Vert <C\lambda^{n} &  & \text{for all }n\geq0.
\end{array}
\label{eq:contraction-expansion-bounds}%
\end{equation}}
Assume that for $\varepsilon=0$ we have a sequence $\Gamma^{1},\ldots
,\Gamma^{L}\subset U$ of homoclinic channels for $f_{0}$, with corresponding
wave maps $\Omega_{\pm}^{\alpha}:\Gamma^{\alpha}\rightarrow\Lambda_{0}$ and
scattering maps $\sigma_{\alpha}:\mathrm{dom}\left( \sigma_{\alpha}\right)
\rightarrow\Lambda_{0}$ for $\alpha=1,\ldots,L$.

Assume that for every $z\in \Lambda_{0}$
\begin{enumerate} 
\item There exists an $\alpha\in\left\{
1,\ldots,L\right\} $ such that $z\in\mathrm{dom}\left(
\sigma_{\alpha}\right) $.
\item There exists an $m\in\mathbb{N}$ and
a point $x\in\Gamma_{\alpha}$, $x\in W_{z}^{u}\left( f_{0},U\right) \cap
W_{\sigma_{\alpha}\left( z\right) }^{s}\left( f_{0}\right) $ such that $%
f_{0}^{m}\left( x\right) \in W_{f_{0}^{m}\left( \sigma_{\alpha}\left(
z\right) \right) }^{s}(f_{0},U)$ (see Figure \ref{fig:main-thm}) 
and
\begin{equation}
\sum_{j=0}^{m-1}\pi_{I}g\left( f_{0}^{j}\left( x\right) \right) -\frac{%
1+\lambda}{1-\lambda}L_{g}C>0.  \label{eq:key-assumption}
\end{equation}
\end{enumerate}
(The above $\alpha,m$ and $x$ can depend on the choice of $z$.)

Then for sufficiently small $\varepsilon>0$ there exists an $x_{\varepsilon}$
and $n_{\varepsilon}>0$ such that%
\begin{equation*}
\pi_{I}\left( f_{\varepsilon}^{n_{\varepsilon}}\left( x_{\varepsilon
}\right) -x_{\varepsilon}\right) >1.
\end{equation*}
\end{theorem}

Before giving the proof let us make a couple of comments about the
assumptions.

\begin{remark}\label{rem:lip-remarks}\correction{comment 21}{}
\cor{Assumption (\ref{eq:Lip-g-assumption}) will readily hold since $\Lambda_{0}$ is compact, so we can take $\overline{U}$ to be
compact as well and existence of $L_g$ follows from the fact that $g$ is smooth}. Conditions (\ref{eq:contraction-expansion-bounds}) will
hold due to the contraction and expansion properties along the stable and
unstable manifolds. What is important for us is to have explicit bounds $%
L_{g}$,$C$ and $\lambda$ which enter into the key assumption (\ref%
{eq:key-assumption}). 
\end{remark}

\begin{remark} \label{rem:comparison-Melnikov}\correction{comment 30}{}\cor{
Condition (\ref{eq:key-assumption}) measures the
influence of the perturbation term $g$ on the coordinate $I$. This can be
thought of as a discrete analogue of the following formula for the perturbed scattering map from \cite{GLS}
\[\sigma_{\varepsilon} = \sigma_0 + \varepsilon J \nabla S\circ \sigma_0 + O(\varepsilon ^2),\]
where
\begin{align*}S(x)=&\lim_{T\to +\infty}\int_{-T}^0 \left [ \frac{dH_{\varepsilon}}{d\varepsilon}_{|\varepsilon=0}\circ \Phi_t \circ (\Omega_{-}^{\Gamma_0})^{-1}\circ \sigma_0^{-1}(x)
				-\frac{dH_{\varepsilon}}{d\varepsilon}_{|\varepsilon=0}\circ \Phi_t \circ \sigma_0^{-1}(x) \right ] dt \\
				&+\lim_{T\to +\infty}\int^{T}_0 \left [ \frac{dH_{\varepsilon}}{d\varepsilon}_{|\varepsilon=0}\circ \Phi_t \circ (\Omega_{+}^{\Gamma_0})^{-1}(x)
				-\frac{dH_{\varepsilon}}{d\varepsilon}_{|\varepsilon=0}\circ \Phi_t (x) \right ] dt,
\end{align*} and where $\Phi_t$ is the flow of the unperturbed system. 
%In (\ref{eq:key-assumption}) the $\pi_I g$ plays the role of $J\nabla\frac{dH_{\varepsilon}}{d\varepsilon}_{|\varepsilon=0}$. 
Instead of computing an integral along a homoclinic orbit of the flow, we compute a sum along a discrete orbit. An important feature is that we are computing it along a finite fragment
of the homoclinic. The second term in (\ref{eq:key-assumption}) takes into
account the truncated tail. We believe that for computer assisted proofs computing a bound for a sum for a discrete dynamical system, which comes from considering a time shift map or Poincar\'e map, is more convenient than computing a bound on an integral over a trajectory of a flow. }
\end{remark}

\begin{remark}
In Theorem \ref{th:main} we assume that the homoclinic channels are in 
$U$, meaning that they are close to $\Lambda_{0}$. This is not a restrictive
assumption, since a homoclinic channel which is far away can be propagated
close to $\Lambda_{0}$ by using backward iterates of $f_{0}$. 
\end{remark}

\begin{remark}
\label{rem:generalisation1} Theorem \ref{th:main} can be generalised
to the setting of higher dimensional $\theta$ and $I$ as follows. If we have
actions $I_1,\ldots,I_k$, we can single out one of them (say $I=I_1$) for
the conditions (\ref{eq:Lip-g-assumption}) and (\ref{eq:key-assumption}), to
obtain diffusion towards the singled out action. 
\end{remark}

\begin{remark} \label{rem:g-term-form}\correction{comment 31}{}We have assumed that $f_{\varepsilon}(x)=f_0(x)+\varepsilon g(x)$. We can assume just as well that $f_{\varepsilon}(x)=f_0(x)+\varepsilon g(\varepsilon,x)$, with smooth $g(\varepsilon,x)$. Then in conditions (\ref{eq:Lip-g-assumption}) and (\ref{eq:key-assumption}) we can write $g(0,\cdot)$ instead of $g(\cdot)$, and the result will follow from the same arguments. Analogous modifications can be made also in subsequent theorems. We consider $g(x)$ instead of $g(\varepsilon,x)$ since it simplifies and shortens the notation.
\end{remark}

\begin{proof}[Proof of Theorem \protect\ref{th:main}]
%By the normally hyperbolic invariant manifold theorem (see Theorem \ref{th:nhim-pert}) 
The manifold $\Lambda_{0}$ is perturbed to a normally hyperbolic invariant
manifold $\Lambda_{\varepsilon}$ for $f_{\varepsilon}$. Moreover, for
sufficiently small $\varepsilon$\corTypo{, }if $z\in\Lambda_{\varepsilon}$, $x_{u}\in
W_{z}^{u}(f_{\varepsilon},U)$ and $x_{s}\in W_{z}^{s}(f_{\varepsilon},U)$\corTypo{, then}
\begin{equation}
\begin{array}{lll}
\left\Vert f_{\varepsilon}^{n}\left( z\right) -f_{\varepsilon}^{n}\left(
x_{u}\right) \right\Vert <C\lambda_{\varepsilon}^{\left\vert n\right\vert }
& \qquad & \text{for all }n\leq0,\medskip \\ 
\left\Vert f_{\varepsilon}^{n}\left( z\right) -f_{\varepsilon}^{n}\left(
x_{s}\right) \right\Vert <C\lambda_{\varepsilon}^{n} &  & \text{for all }%
n\geq0,%
\end{array}
\label{eq:contraction-expansion-bounds-eps}
\end{equation}
with $\lambda_{\varepsilon}$ converging to $\lambda$ as $\varepsilon$ tends
to zero.

Since transversal intersections persist under perturbation, the homoclinic
channels $\Gamma^{1},\ldots,\Gamma^{L}$ for $f_{0}$ are perturbed to
homoclinic channels $\Gamma_{\varepsilon}^{1},\ldots,\Gamma_{%
\varepsilon}^{l} $ for $f_{\varepsilon}$, provided that $\varepsilon>0$ is
sufficiently small. 
%For $\alpha\in\left\{ 1,\ldots,L\right\} ,$ a point $y\in\Gamma^{\alpha}$, $y\in W_{z_{-}}^{u}\left( f_{0}\right) \capW_{z_{+}}^{s}\left( f_{0}\right) $ is perturbed to a nearby point $y_{\varepsilon}\in \Gamma_{\varepsilon}^{\alpha}$ for which $y_{\varepsilon}\in W_{z_{\varepsilon -}}^{u}\left( f_{\varepsilon}\right)\cap W_{z_{\varepsilon+}}^{s}\left( f_{\varepsilon}\right) $. 
This leads \cite{DLS1} to a scattering map \correction{comment 22}{$\sigma_{\alpha
}^{\varepsilon}:\Omega_{-}^{\Gamma_{\varepsilon}^{\alpha}}\left(
\Gamma_{\varepsilon}^{\alpha}\right) \rightarrow\Omega_{+}^{\Gamma
_{\varepsilon}^{\alpha}}\left( \Gamma_{\varepsilon}^{\alpha}\right) $} for $%
f_{\varepsilon}$.
%\begin{equation*}
%\sigma_{\alpha}^{\varepsilon}\left( z_{\varepsilon-}\right) =z_{\varepsilon+}.
%\end{equation*}

Our first objective is to show that for any $z_{\varepsilon}\in
\Lambda_{\varepsilon}$ there exists an $m\in\mathbb{N}$ and $%
\alpha\in\left\{ 0,\ldots,L\right\} $ (both $m$ and $\alpha$ can depend on $%
z_{\varepsilon}$) such that%
\begin{equation}
\pi_{I}\left( f_{\varepsilon}^{m}\circ\sigma_{\alpha}^{\varepsilon
}(z_{\varepsilon})-z_{\varepsilon}\right) >\varepsilon c,
\label{eq:shadowing-proof-key-step}
\end{equation}
where $c>0$ is a constant, small enough so that we have
\begin{equation}
\sum_{j=0}^{m-1}\pi_{I}g\left( f_{0}^{j}\left( x\right) \right) -\frac{%
1+\lambda}{1-\lambda}L_{g}C>c \corTypo{,} \label{eq:key-assumption-in-proof}
\end{equation}
for any $z\in\Lambda_{0}$ (with the same $c$). We can
find such small $c$ because of (\ref{eq:key-assumption})  and compactness of $\Lambda_{0}$.

\correction{comment 32}{It turns out that (\ref{eq:shadowing-proof-key-step}) is the main step in
our proof, since once it is established the result follows from the
shadowing Theorem \ref{th:shadowing}. Below we first prove (\ref%
{eq:shadowing-proof-key-step}) and then discuss how to apply the shadowing
method.}

Consider now a $z_{\varepsilon}\in\Lambda_{\varepsilon}.$ 
%This point is a perturbation of some point $z\in\Lambda_{0}.$ 
By our assumptions, \correction{comment 33}{for every $z\in\Lambda_0$ }we have an $\alpha\in\left\{
1,\ldots,L\right\} $, $m\in\mathbb{N}$ and $x\in W_{z}^{u}\left(
f_{0},U\right) \cap W_{\sigma_{\alpha}\left( z\right) }^{s}\left(
f_{0}\right) $ such that $f_{0}^{m}\left( x\right) \in
W_{f_{0}^{m}(\sigma_{\alpha}(z)\mathbf{)}}^{s}\left( f_{0},U\right) $ and (%
\ref{eq:key-assumption-in-proof}) holds. This means that for sufficiently small $%
\varepsilon$ %, for some $\alpha\in\left\{ 1,\ldots,L\right\} $ and some $m\in\mathbb{N}$ 
we shall have \corTypo{an }$x_{\varepsilon}\in
W_{z_{\varepsilon}}^{u}\left( f_{\varepsilon},U\right) $ \corTypo{such that }$%
f_{\varepsilon}^{m}\left( x_{\varepsilon }\right) \in
W_{f_{\varepsilon}^{m}(\sigma_{\alpha}^{\varepsilon}\left(
z_{\varepsilon}\right) )}^{s}\left( f_{\varepsilon},U\right) $, and by (%
\ref{eq:contraction-expansion-bounds-eps}) 
%\marginpar{MJC: corrected to $\lambda_{\varepsilon}$ on the rhs.}
\begin{align}
\left\Vert f_{\varepsilon}^{j}\left( z_{\varepsilon}\right) -f_{\varepsilon
}^{j}\left( x_{\varepsilon}\right) \right\Vert &
<C\lambda_{\varepsilon}^{\left\vert j\right\vert }\qquad\text{for }j\leq0
\label{eq:term-3-helpful-bound} \\
\left\Vert f_{\varepsilon}^{m+j}\left( \sigma_{\alpha}^{\varepsilon}\left(
z_{\varepsilon}\right) \right) -f_{\varepsilon}^{m+j}\left( x_{\varepsilon
}\right) \right\Vert & <C\lambda^{j}_{\varepsilon}\qquad\text{for }j\geq0.
\label{eq:term-1-helpful-bound}
\end{align}
Due to (\ref{eq:key-assumption-in-proof}) and the continuous dependence of $%
x_{\varepsilon},$ $\lambda_{\varepsilon}$ \corTypo{on $\eps$, }for sufficiently small $%
\varepsilon$ we shall have%
\begin{equation*}
\sum_{j=0}^{m-1}\pi_{I}g\left( f_{\varepsilon}^{j}\left( x_{\varepsilon
}\right) \right) -\frac{1+\lambda_{\varepsilon}}{1-\lambda_{\varepsilon}}%
L_{g}C>c.
\end{equation*}

In order to show (\ref{eq:shadowing-proof-key-step}) we will split our
estimates into three terms%
\begin{equation}
f_{\varepsilon}^{m}\left( \sigma_{\alpha}^{\varepsilon}\left( z_{\varepsilon
}\right) \right) -z_{\varepsilon}=\left[ f_{\varepsilon}^{m}\left(
\sigma_{\alpha}^{\varepsilon}\left( z_{\varepsilon}\right) \right)
-f_{\varepsilon}^{m}\left( x_{\varepsilon}\right) \right] +\left[
f_{\varepsilon}^{m}\left( x_{\varepsilon}\right) -x_{\varepsilon}\right] +%
\left[ x_{\varepsilon}-z_{\varepsilon}\right] ,  \label{eq:the-three-terms}
\end{equation}
and investigate bounds on the projection $\pi_{I}$ for each of them. We
start by showing that 
\begin{equation}
\left\vert \pi_{I}\left[ f_{\varepsilon}^{m}\left( \sigma_{\alpha
}^{\varepsilon}\left( z_{\varepsilon}\right) \right)
-f_{\varepsilon}^{m}\left( x_{\varepsilon}\right) \right] \right\vert
\leq\varepsilon \frac{1}{1-\lambda_{\varepsilon}}L_{g}C.
\label{eq:term1-bound}
\end{equation}
%This is shown in below paragraph.

Indeed, since $f_{\varepsilon}\left( x\right) =f_{0}\left( x\right)
+\varepsilon g\left( \corTypo{x}\right) $ and $\pi_{I}f_{0}\left( x\right)
=\pi_{I}x$, for any $x_{1},x_{2}$ we have%
\begin{align*}
\pi_{I}f_{\varepsilon}(x_{1})-\pi_{I}f_{\varepsilon}\left( x_{2}\right) &
=\pi_{I}f_{0}(x_{1})+\varepsilon\pi_{I}g\left( x_{1}\right)
-\pi_{I}f_{0}\left( x_{2}\right) -\varepsilon\pi_{I}g\left( x_{2}\right) \\
& =\pi_{I}\left( x_{1}-x_{2}\right) +\varepsilon\pi_{I}\left( g\left(
x_{1}\right) -g\left( x_{2}\right) \right) .
\end{align*}
It follows by induction that%
\begin{equation}
\pi_{I}\left( f_{\varepsilon}^{j}(x_{1})-f_{\varepsilon}^{j}\left(
x_{2}\right) \right) =\pi_{I}\left[ x_{1}-x_{2}\right] +\varepsilon
\sum_{i=0}^{j-1}\pi_{I}\left( g\left( f_{\varepsilon}^{i}(x_{1})\right)
-g\left( f_{\varepsilon}^{i}(x_{2})\right) \right) .
\label{eq:I-difference-by-sum-of-g}
\end{equation}
Taking $x_{1}=f_{\varepsilon}^{m}\left( \sigma_{\alpha}^{\varepsilon}\left(
z_{\varepsilon}\right) \right) $ and $x_{2}=f_{\varepsilon}^{m}\left(
x_{\varepsilon}\right) $ from (\ref{eq:I-difference-by-sum-of-g}) with $%
\pi_{I}\left[ x_{1}-x_{2}\right] $ moved to the left hand side, we have 
%\marginpar{MJC: corrected to $\lambda_{\varepsilon}$ on the rhs.}
\begin{align*}
& \left\vert \pi_{I}\left[ f_{\varepsilon}^{m}\left( \sigma_{\alpha
}^{\varepsilon}\left( z_{\varepsilon}\right) \right)
-f_{\varepsilon}^{m}\left( x_{\varepsilon}\right) \right] \right\vert \\
& =\left\vert \pi_{I}\left( f_{\varepsilon}^{m+j}\left( \sigma_{\alpha
}^{\varepsilon}\left( z_{\varepsilon}\right) \right) )-f_{\varepsilon
}^{m+j}\left( x_{\varepsilon}\right) \right)
-\varepsilon\sum_{i=0}^{j-1}\pi_{I}\left( g\left(
f_{\varepsilon}^{m+i}\left( \sigma_{\alpha }^{\varepsilon}\left(
z_{\varepsilon}\right) \right) \right) -g\left(
f_{\varepsilon}^{m+i}(x_{\varepsilon})\right) \right) \right\vert \\
& <C\lambda_{\varepsilon}^{j}+\varepsilon L_{g}\sum_{i=0}^{j-1}\left\Vert
f_{\varepsilon}^{m+i}\left( \sigma_{\alpha}^{\varepsilon}\left(
z_{\varepsilon}\right) \right) -f_{\varepsilon}^{m+i}(x_{\varepsilon
})\right\Vert \\
& <C\lambda_{\varepsilon}^{j}+\varepsilon
L_{g}\sum_{i=0}^{j-1}C\lambda^{i}_{\varepsilon},
\end{align*}
where the last two inequalities follow from (\ref{eq:term-1-helpful-bound}).
Letting $j\rightarrow\infty$, we obtain (\ref{eq:term1-bound}).

Now consider the third term from (\ref{eq:the-three-terms}). An analogous
bound to (\ref{eq:term1-bound}) is obtained as follows. From (\ref%
{eq:I-difference-by-sum-of-g}) we have that 
\begin{align}
\pi_{I}\left( x_{1}-x_{2}\right) & =\pi_{I}\left[ f_{\varepsilon}^{-j}\left(
x_{1}\right) -f_{\varepsilon}^{-j}\left( x_{2}\right) \right]
+\varepsilon\sum_{i=0}^{j-1}\pi_{I}\left( g\left(
f_{\varepsilon}^{i-j}(x_{1})\right) -g\left(
f_{\varepsilon}^{i-j}(x_{2})\right) \right)  \notag \\
& =\pi_{I}\left[ f_{\varepsilon}^{-j}\left( x_{1}\right) -f_{\varepsilon
}^{-j}\left( x_{2}\right) \right] +\varepsilon\sum_{i=-j}^{-1}\pi _{I}\left(
g\left( f_{\varepsilon}^{i}(x_{1})\right) -g\left(
f_{\varepsilon}^{i}(x_{2})\right) \right).
\label{eq:I-difference-by-sum-of-g-2}
\end{align}
Taking $x_{1}=x_{\varepsilon}$ and $x_{2}=z_{\varepsilon}$, from (\ref%
{eq:I-difference-by-sum-of-g-2}) we obtain%
\begin{align*}
 \left\vert \pi_{I}\left( x_{\varepsilon}-z_{\varepsilon}\right) \right\vert
& \leq\left\vert \pi_{I}\left[ f_{\varepsilon}^{-j}\left( x_{\varepsilon
}\right) -f_{\varepsilon}^{-j}\left( z_{\varepsilon}\right) \right]
\right\vert +\varepsilon\sum_{i=-j}^{-1}\left\vert \pi_{I}\left( g\left(
f_{\varepsilon}^{i}(x_{\varepsilon})\right) -g\left(
f_{\varepsilon}^{i}(z_{\varepsilon})\right) \right) \right\vert \\
& <C\lambda_{\varepsilon}^{j}+\varepsilon L_{g}\sum_{i=-j}^{-1}\left\Vert
f_{\varepsilon}^{i}(x_{1})-f_{\varepsilon}^{i}(x_{2})\right\Vert \\
& <C\lambda_{\varepsilon}^{j}+\varepsilon L_{g}\sum_{i=1}^{j}C\lambda
_{\varepsilon}^{i},
\end{align*}
where the last two inequalities follow from (\ref{eq:term-3-helpful-bound}).
Taking $j \to \infty$ gives%
\begin{equation}
\left\vert \pi_{I}\left( x_{\varepsilon}-z_{\varepsilon}\right) \right\vert
\leq\varepsilon\frac{\lambda_{\varepsilon}}{1-\lambda_{\varepsilon}}CL_{g}.
\label{eq:term3-bound}
\end{equation}

We now turn to the middle term from (\ref{eq:the-three-terms}). Since $%
f_{\varepsilon}\left( x\right) =f_{0}\left( x\right) +\varepsilon g\left(
x\right) $ and $\pi_{I}f_{0}\left( x\right) =x$, it follows that (below we
consider $x=f_{\varepsilon}^{j}(x_{\varepsilon})$) 
\begin{align*}
\pi_{I}\left( f_{\varepsilon}\left( f_{\varepsilon}^{j}(x_{\varepsilon
})\right) -f_{\varepsilon}^{j}(x_{\varepsilon})\right) & =\pi_{I}f_{0}\left(
f_{\varepsilon}^{j}(x_{\varepsilon})\right) +\varepsilon\pi _{I}g\left(
f_{\varepsilon}^{j}(x_{\varepsilon})\right)
-\pi_{I}f_{\varepsilon}^{j}(x_{\varepsilon}) \\
& =\varepsilon\pi_{I}g\left( f_{\varepsilon}^{j}(x_{\varepsilon})\right)\corTypo{,}
\end{align*}
so%
\begin{equation}
\pi_{I}\left( f_{\varepsilon}^{m}(x_{\varepsilon})-x_{\varepsilon}\right)
=\sum_{j=0}^{m-1}\pi_{I}\left( f_{\varepsilon}^{j+1}\left( x_{\varepsilon
}\right) -f_{\varepsilon}^{j}\left( x_{\varepsilon}\right) \right)
=\varepsilon\sum_{j=0}^{m-1}\pi_{I}g\left(
f_{\varepsilon}^{j}(x_{\varepsilon })\right) .  \label{eq:term2-bound}
\end{equation}

Combining (\ref{eq:the-three-terms}), (\ref{eq:term1-bound}), (\ref%
{eq:term3-bound}), (\ref{eq:term2-bound}) gives%
\begin{equation*}
\pi_{I}\left( f_{\varepsilon}^{m}\left( \sigma_{\alpha}^{\varepsilon}\left(
z_{\varepsilon}\right) \right) -z_{\varepsilon}\right) >\varepsilon\left(
\sum_{j=0}^{m-1}\pi_{I}g\left( f_{\varepsilon}^{j}(x_{\varepsilon})\right) -%
\frac{1+\lambda_{\varepsilon}}{1-\lambda_{\varepsilon}}CL_{g}\right) .
\end{equation*}
Since the right hand side of the inequality above depends continuously on $%
\varepsilon$, from (\ref{eq:key-assumption-in-proof}) we obtain (\ref%
{eq:shadowing-proof-key-step}) for sufficiently small $\varepsilon$.

This establishes the key step (\ref{eq:shadowing-proof-key-step}). We now
apply Theorem \ref{th:shadowing} to prove our result. Indeed, since $\omega
_{|\Lambda_{0}}$ is nondegenerate, the same is true for $\omega
_{|\Lambda_{\varepsilon}}$ for sufficiently small $\varepsilon.$ %Since $f_{\varepsilon}$ is symplectic, by Remark \ref{rem:symplectic} almost every point of $\Lambda_{\varepsilon}$ is recurrent for $f_{\varepsilon}|_{\Lambda_{\varepsilon}}.$ 
Choose $x_{0}\in\Lambda_{\varepsilon}$ having $%
\pi_{I}x_{0}=0$ and consider $\alpha_{0},m_{0}$, (which are allowed to
depend on $x_{0}$) such that for%
\begin{equation*}
x_{1}:=f_{\varepsilon}^{m_{0}}\circ\sigma_{\alpha_{0}}^{\varepsilon}\left(
x_{0}\right)\corTypo{,}
\end{equation*}
we have $\pi_{I}\left(
f_{\varepsilon}^{m_{0}}\circ\sigma_{\alpha_{0}}^{\varepsilon}\left(
x_{0}\right) -x_{0}\right) >c\varepsilon$. This can be done due to (\ref%
{eq:shadowing-proof-key-step}). Repeating the procedure, choosing $%
\alpha_{i},m_{i}$ for which $\left(
\pi_{I}f_{\varepsilon}^{m_{i}}\circ\sigma_{\alpha_{i}}^{\varepsilon}\left(
x_{i}\right) -x_{i}\right) >c\varepsilon$ we obtain a pseudo-orbit $%
x_{0},\ldots,x_{N}$, where $x_{i+1}:=f_{\varepsilon}^{m_{i}}\circ
\sigma_{\alpha_{i}}^{\varepsilon}\left( x_{i}\right) $, for which 
\begin{equation*}
\pi_{I}\left( x_{N}-x_{0}\right) >Nc\varepsilon.
\end{equation*}
Choosing $N$ large enough, we obtain that $\pi_{I}\left( x_{N}-x_{0}\right)
>1$. By Theorem \ref{th:shadowing} the pseudo-orbit $x_{0},\ldots,x_{N}$ is $%
\delta$-shadowed by a true orbit, so by choosing 
\begin{equation*}
\delta<\frac{1}{2}\left( \pi_{I}\left( x_{N}-x_{0}\right) -1\right) \corTypo{,}
\end{equation*}
we have the claim.
\end{proof}

In Theorem \ref{th:main} we assume that for any point in $\Lambda_{0}$ we
can find a pseudo-orbit such that we have a gain in $I$. Note however that
we do not need  to have (\ref{eq:key-assumption}) for all $z\in\Lambda_{0}$.
It is enough to have (\ref{eq:key-assumption}) for $z$ on some smaller
subset of $\Lambda_{0},$ provided that we can ensure that the pseudo-orbit
constructed in the proof of Theorem \ref{th:main} returns to that set. Below
we formulate Theorem \ref{cor:strip}, which will make this statement
precise. First we introduce one notion.

\begin{definition}
Consider the topology on $\Lambda_{0}\cap\left\{ I\in\left[ 0,1\right]
\right\} $ induced by $\Lambda_{0}$. We say that an open set $%
S\subset\Lambda_{0}\cap\left\{ I\in\left[ 0,1\right] \right\} $ is a strip
in $\Lambda_{0}$ iff%
\begin{equation*}
S\cap\left\{ z\in\Lambda_{0}:\pi_{I}z=\iota\right\} \neq\emptyset \qquad%
\text{for any }\iota\in\left[ 0,1\right] .
\end{equation*}
(Recall that we consider $\mathbb{T}=\corTypo{\mathbb{R}\, \mathrm{mod}\, 2\pi}$; the interval $I\in[0,1]$ is a strict subset of $[0,2\pi)$. Since $S$ is open in the topology induced on $\Lambda_{0}\cap\left\{ I\in%
\left[ 0,1\right] \right\} $ we require that it contains points with $I=0 $
and $I=1$.)
\end{definition}

\begin{figure}[tbp]
\begin{center}
\includegraphics[height=2.6cm]{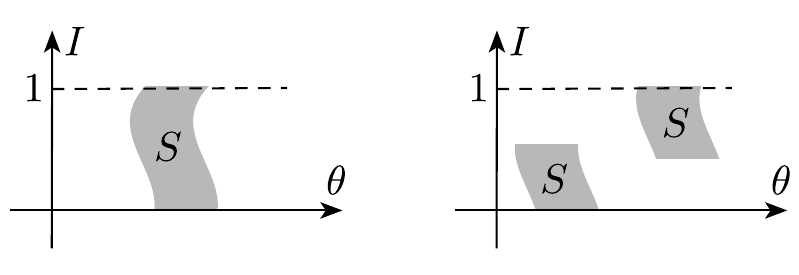}
\end{center}
\caption{A typically shaped strip (left) and a `strip' consisting of two
connected components (right). }
\label{fig:strips}
\end{figure}
We refer to $S$ as a `strip' because usually we would choose it to be of the
shape as in the left hand side of Figure \ref{fig:strips}. In principle
though a strip \corTypo{might }look differently, for instance as on the right hand side
plot in figure \ref{fig:strips}. %(Provided that conditions from below corollary are fulfilled, the result holds regardless from the shape of the `strip'.)

In subsequent two theorems we consider two strips $S^{+}$ and $S^{-}$. The
strip $S^{+}$ is used to validate diffusion in $I$, which increases $I$ by
order one. The strip $S^{-}$ will be used to prove diffusion in which $I$
decreases by order one.

\begin{theorem}
\label{cor:strip} Assume that conditions (\ref{eq:Lip-g-assumption}) and (%
\ref{eq:contraction-expansion-bounds}) are satisfied, and that for $%
\varepsilon =0$ we have the sequence \corTypo{of }scattering maps $\sigma _{\alpha }:%
\mathrm{dom}\left( \sigma _{\alpha }\right) \rightarrow \Lambda _{0}$ for $%
\alpha =1,\ldots ,L$. 
Let $S^{+}\subset \Lambda _{0}$ be a strip\footnote{%
We add the plus in the superscript for $S^{+}$ since this strip is used to
increase $I$. In subsequent theorem we will have another strip $S^{-}$ to
obtain diffusion in the \corTypo{opposite }direction.}. Assume that for every $z\in \overline{S^{+}}$ \corTypo{:}
\begin{enumerate}
\item \corTypo{there exists a constant $m\in \mathbb{N}$ and } an $\alpha \in
\left\{ 1,\ldots ,L\right\} $ for which $z\in \mathrm{dom}\left(
\sigma _{\alpha }\right) $ \corTypo{and}
\begin{equation}
f_{0}^{m}\circ \sigma _{\alpha }\left( z\right) \in S^{+},
\label{eq:strip-return-cond}
\end{equation}%
\item  \corTypo{there exists }a point $x\in
W_{z}^{u}\left( f_{0},U\right) \cap W_{\sigma _{\alpha }\left( z\right)
}^{s}\left( f_{0}\right) $ such that $f_{0}^{m}\left( x\right) \in
W_{f_{0}^{m}\left( \sigma _{\alpha }\left( z\right) \right) }^{s}(f_{0},U)$
and%
\begin{equation}
\sum_{j=0}^{m-1}\pi _{I}g\left( f_{0}^{j}\left( x\right) \right) -\frac{%
1+\lambda }{1-\lambda }L_{g}C>0.  \label{eq:key-assumption-again}
\end{equation}
\end{enumerate}
\corTypo{(The $m,\alpha$ and $x$ can depend on $z$.) }Then for sufficiently small $\varepsilon >0$ there exists an $x_{\varepsilon
}$ and $n_{\varepsilon }>0$ such that%
\begin{equation*}
\pi _{I}\left( f_{\varepsilon }^{n_{\varepsilon }}\left( x_{\varepsilon
}\right) -x_{\varepsilon }\right) >1.
\end{equation*}
\end{theorem}

\begin{proof}
The result follows by making minor adjustments to the arguments in the proof
of Theorem \ref{th:main}. Let $S_{\varepsilon }^{+}\subset \Lambda
_{\varepsilon }$ be the perturbation of the strip $S^{+}\subset \Lambda _{0}$%
. As in the proof of Theorem \ref{th:main} we construct a pseudo orbit $%
x_{i+1}=f_{\varepsilon }^{m_{i}}\circ \sigma _{\alpha _{i}}^{\varepsilon
}\left( x_{i}\right) $, starting with a point $x_{0}\in S_{\varepsilon }^{+}$
with $\pi _{I}x_{0}=0$. Note \corTypo{that }we assume that (\ref{eq:strip-return-cond})
holds for any $z\in \overline{S^{+}}$ (with choices of $m$ and $\alpha $
depending on $z$). This means that for sufficiently small $\varepsilon $,
and for any point $z_{\varepsilon }\in S_{\varepsilon }^{+}$, there is an $%
m=m\left( z_{\varepsilon }\right) ,\alpha =\alpha \left( z_{\varepsilon
}\right) $ such that $f_{\varepsilon }^{m(z_{\varepsilon })}\circ \sigma
_{\alpha (z_{\varepsilon })}^{\varepsilon }\left( z_{\varepsilon }\right)
\in S_{\varepsilon }^{+}$. In other words, $z_{\varepsilon }$ `returns' to
the strip for sufficiently small $\varepsilon $. Due to the compactness of $%
\overline{S^{+}}$, a sufficiently small choice of $\varepsilon $ guarantees
that we have $f_{\varepsilon }^{m(z_{\varepsilon })}\circ \sigma _{\alpha
(z_{\varepsilon })}^{\varepsilon }\left( z_{\varepsilon }\right) \in
S_{\varepsilon }^{+}$ for all $z_{\varepsilon }\in S_{\varepsilon }^{+}.$ In
short, condition (\ref{eq:strip-return-cond}) ensures that the pseudo-orbit $%
x_{i+1}=f_{\varepsilon }^{m_{i}}\circ \sigma _{\alpha _{i}}^{\varepsilon
}\left( x_{i}\right) $ remains within the strip $S_{\varepsilon }^{+}$ for
sufficiently small $\varepsilon $. By (\ref{eq:key-assumption-again}) and
identical arguments to those from Theorem \ref{th:main} we therefore have 
\begin{equation*}
\pi _{I}\left( x_{i+1}-x_{i}\right) >\varepsilon c,
\end{equation*}
for some $c>0$, and the result follows from the shadowing argument\corTypo{, by applying Theorem \ref{th:shadowing}, }just as in the proof of
Theorem \ref{th:main}.
\end{proof}

A mirror result gives diffusion in the opposite direction.

\begin{theorem}
\label{cor:strip_down} Assume that conditions (\ref{eq:Lip-g-assumption})
and (\ref{eq:contraction-expansion-bounds}) are satisfied, and that for $%
\varepsilon =0$ we have the sequence of scattering maps $\sigma _{\alpha }:%
\mathrm{dom}\left( \sigma _{\alpha }\right) \rightarrow \Lambda _{0}$ for $%
\alpha =1,\ldots ,L$. Let
 $S^{-}\subset \Lambda _{0}$ be a strip. Assume that for every $z\in \overline{%
S^{-}}$\corTypo{:}
\begin{enumerate}
\item \corTypo{there exists a constant $m\in \mathbb{N}$ and }an $\alpha \in \left\{ 1,\ldots ,L\right\} $ for which $%
z\in \mathrm{dom}\left( \sigma _{\alpha }\right) $ \corTypo{and}
\begin{equation*}
f_{0}^{m}\circ \sigma _{\alpha }\left( z\right) \in S^{-},
\end{equation*}%
\item \corTypo{there exists }a point $x\in
W_{z}^{u}\left( f_{0},U\right) \cap W_{\sigma _{\alpha }\left( z\right)
}^{s}\left( f_{0}\right) $ such that $f_{0}^{m}\left( x\right) \in
W_{f_{0}^{m}\left( \sigma _{\alpha }\left( z\right) \right) }^{s}(f_{0},U)$
and%
\begin{equation*}
\sum_{j=0}^{m-1}\pi _{I}g\left( f_{0}^{j}\left( x\right) \right) +\frac{%
1+\lambda }{1-\lambda }L_{g}C<0.
\end{equation*}%
\end{enumerate}
\corTypo{(The $m,\alpha$ and $x$ can depend on $z$.) }Then for sufficiently small $\varepsilon >0$ there exists an $x_{\varepsilon
}$ and $n_{\varepsilon }>0$ such that%
\begin{equation*}
\pi _{I}\left( x_{\varepsilon }-f_{\varepsilon }^{n_{\varepsilon }}\left(
x_{\varepsilon }\right) \right) >1.
\end{equation*}
\end{theorem}

\begin{proof}
The proof follows as in the proof of Theorem \ref{cor:strip}.
\end{proof}

\corTypo{By }combining the two strips we obtain shadowing of any prescribed finite
sequence of actions.

\begin{theorem}
\label{th:shadowing-seq} Assume that two strips $S^{+}$ and $S^{-}$ satisfy
assumptions of Theorems \ref{cor:strip} and \ref{cor:strip_down},
respectively. If in addition

\begin{enumerate}
\item for every $z\in \overline{S^{+}}$ there exists an $n$ (which
can depend on $z$) such that $f_{0}^{n}(z)\in S^{-}$, and

\item for every $z\in \overline{S^{-}}$ there exists an $n$ (which
can depend on $z$) such that $f_{0}^{n}(z)\in S^{+}$,
\end{enumerate}
then \correction{comment 34}{}\cor{there exists an $M>0$, such that }for any given finite sequence $\{I_{k}\}_{k=0}^{N}$, %and any given $\delta >0$, 
and for sufficiently small $\varepsilon $ there exists an orbit of $%
f_{\varepsilon }$ which $\cor{\varepsilon M} $-shadows the actions $I_{k}$; i.e. there
exists a point $z^{\varepsilon}_{0}$ and a sequence of integers $%
n_{1}^{\varepsilon }\leq n_{2}^{\varepsilon }\leq \ldots \leq
n_{N}^{\varepsilon }$ such that%
\begin{equation*}
\left\Vert \pi _{I}f_{\varepsilon }^{n_{k}^{\varepsilon
}}(z^{\varepsilon}_{0})-I_{k}\right\Vert <\cor{\varepsilon M} .
\end{equation*}
\end{theorem}

\begin{proof}
Suppose that $I_{1}>I_{0}$. (The opposite case will be analogous.) As in the
proof of Theorem \ref{cor:strip}, we construct a pseudo orbit $%
x_{i+1}=f_{\varepsilon }^{m_{i}}\circ \sigma _{\alpha _{i}}^{\varepsilon
}\left( x_{i}\right) $, $x_{i}\in S_{\varepsilon }^{+}$, starting with a
point $x_{0}$ with $\pi _{I}x_{0}=I_{0}$, such that 
\begin{equation*}
\pi _{I}\left( x_{i+1}-x_{i}\right) >\varepsilon c,
\end{equation*}%
for some $c>0$. \cor{By smooth dependence of $f_{\varepsilon}$ and $\sigma^{\varepsilon}_{\alpha_i}$ on $\varepsilon$, and by the compactness of the normally hyperbolic manifold, we can choose large enough $M$ so that for every such $x_i$ we have 
\begin{equation}
|\pi _{I}\left( x_{i+1}-x_{i}\right)| < \varepsilon M/2. \label{eq:eps-shadowing}
\end{equation}
(The $M$ can be chosen to be independent from $x_i$.) }
We can therefore find a pseudo orbit for which $\left\vert \pi
_{I}x_{i_{1}}-I_{1}\right\vert <\cor{\varepsilon M} /2$, for some $i_1>0$. If $%
I_{2}>I_{1} $, and we carry on as in the proof of Theorem \ref{cor:strip},
continuing with our pseudo-orbit along $S_{\varepsilon }^{+}$, until we
reach $x_{i_{2}}$ such that $\left\vert \pi _{I}x_{i_{2}}-I_{2}\right\vert
<\cor{\varepsilon M} /2$. If on the other hand $I_{2}<I_{1}$, then we take $%
x_{i_{1}+1}=f_{\varepsilon }^{n}(x_{m_{l_{1}}})$, where the $n$ is the
number from assumption 1. (for $z=x_{i_{1}}$). For sufficiently small $%
\varepsilon $ we will obtain that $x_{i_{1}+1}\in S_{\varepsilon }^{-}$. We
now construct the subsequent points $x_{i}$ along the strip $S_{\varepsilon
}^{-}$, going down in $I$ along each step, until we reach $x_{i_{2}}$
satisfying $\left\vert \pi _{I}x_{i_{2}}-I_{2}\right\vert <\cor{\varepsilon M} /2$. \cor{(Possibly we might need to enlarge $M$, so that we ensure (\ref{eq:eps-shadowing}) for points from the strip $S_{\varepsilon }^{-}$.) }
Depending on whether $I_{k+1}>I_{k}$ or $I_{k+1}<I_{k}$ we \corTypo{proceed }in an
analogous manner: to move up in $I$ we construct the given fragment of the
pseudo-orbit along $S_{\varepsilon }^{+}$; and to go down in $I$ we
construct the given fragment of the pseudo-orbit along $S_{\varepsilon }^{-}$%
. Assumptions 1., 2. ensure that our pseudo-orbit can be chosen to jump
between the strips $S_{\varepsilon }^{+}$ and $S_{\varepsilon }^{-}$ at any
stage of the construction.

This way we construct a pseudo orbit for which%
\begin{equation*}
\left\vert \pi _{I}x_{i_{k}}-I_{k}\right\vert <\cor{\varepsilon M} /2\qquad \text{for }%
k=0,\ldots ,N.
\end{equation*}%
By Theorem \ref{th:shadowing} the pseudo-orbit $x_{i}$ can be $\delta$
-shadowed by a true orbit. \cor{For fixed $\varepsilon $, we can choose $\delta=\varepsilon M/2$, }which concludes our proof.
\end{proof}

\begin{remark}
\label{rem:generalisation2} \emph{Theorems \ref{cor:strip}, \ref%
{cor:strip_down}, \ref{th:shadowing-seq} can be generalised to the setting
of higher dimensional $I$ by singling out one action, as in Remark \ref%
{rem:generalisation1}. The definition of the strip is then with respect to
that particular action. }
\end{remark}

%TCIDATA{OutputFilter=latex2.dll}
%TCIDATA{Version=5.00.0.2606}
%TCIDATA{LaTeXparent=0,0,MMFedit.tex}

%\begin{figure}[t]
%\begin{center}
%\includegraphics[width=6cm]{phaseSpace_alpha_4}
%\end{center}
%\caption{Phase space structure of the Chirikov Standard Map when $\alpha=4.$
%Black dots indicate the dynamics of a number of ``typical'' orbits. The stable
%and unstable manifolds of the fixed point at the origin are depicted by the
%red and blue curves respectively.}%
%\label{fig:phaseSpace}%
%\end{figure}

\section{Example of application \label{sec:example}}

In this section we discuss our example, the generalized standard map, to which
we apply our method. We give a computer assisted proof of the existence of
diffusing orbits by applying Theorem \ref{th:shadowing-seq}. We validate the
assumptions of the theorem using two independent implementations, which use
different methods to obtain bounds on the stable/unstable manifolds of the
NHIM. The first is based on cone conditions \cite{Z, MR2784613, MR3397322},
and the second on the parameterization method \cite{MR1976079, MR1976080,
MR2177465}.  The stable/unstable manifolds calculations are discussed in more detail in 
Section \ref{sec:cap}.
%\marginpar{some citation needed}

\subsection{The Generalized Standard Map}

Let $V(q)$ be a \correction{comment 35}{$(2\pi\mathbb{Z})^{n}$-periodic function}. Consider a map
$f:\mathbb{R}^{2n}\rightarrow\mathbb{R}^{2n}$ given by
\[
f(q,p)=(q+p+\nabla V(q),p+\nabla V(q)).
\]

\begin{remark}
The map $f$ is symplectic and has the generating function
\[
S(q,Q)=\frac{1}{2}\left\Vert Q-q\right\Vert ^{2}+V(q).
\]

\end{remark}

\begin{remark}
When $V=0$ the map is completely integrable. When $n=1$ and $V(q)=\alpha
\cos(q)$ then we obtain the Chirikov Standard Map.
\end{remark}

For our example, taking $q=\left(  x,\theta\right)  ,$ $p=\left(
y,I\right)  $ and%
\[
V_{\varepsilon}(x,\theta)=\alpha\cos\left(  x\right)  -\varepsilon\sin
(x)\sin(\theta),
\]
we obtain a family of maps (\ref{eq:example}).
%\begin{equation}
%f_{\varepsilon}\left(  x,y,\theta,I\right)  =\left(
%\begin{array}
%[c]{l}%
%x+y+\alpha\sin\left(  x\right) \\
%y+\alpha\sin\left(  x\right) \\
%\theta+I\\
%I
%\end{array}
%\right)  +\varepsilon\left(
%\begin{array}
%[c]{c}%
%\cos(x)\sin(\theta)\\
%\cos(x)\sin(\theta)\\
%\sin(x)\cos(\theta)\\
%\sin(x)\cos(\theta)
%\end{array}
%\right)  . \label{eq:example}%
%\end{equation}
To be in line with the setup from section \ref{sec:main-results} we interpret
that $f_{\varepsilon}:\mathbb{R}^{2}\times\mathbb{T}^{2}\rightarrow
\mathbb{R}^{2}\times\mathbb{T}^{2}$. (We could just as well interpret
$f_{\varepsilon}$ to be on $\mathbb{T}^{4}.$)

In our example we take $\alpha=4.$ For this parameter, when $\varepsilon=0$,
on the $x,y$ coordinates we have a hyperbolic fixed point at the origin. The
reader can get a sense of the dynamics by referring to the simulation results
illustrated in Figure \ref{fig:phaseSpace}.
%\marginpar{I suggest to replot Figure \ref{fig:phaseSpace} in gnuplot, so that we have the same style as the strips plot. \newline Note that black points are plotted $\mod 2 \pi$ but manifolds are not. It would be good to fix this.}

\corTypo{For }$\varepsilon=0$ the system consists of a pair of decoupled maps
$F:\mathbb{R}^{2}\rightarrow\mathbb{R}^{2}$ and $G:\mathbb{T}^{2}%
\rightarrow\mathbb{T}^{2}$
\begin{equation}
f_{0}\left(  x,y,\theta,I\right)  =\left(  F\left(  x,y\right)  ,G\left(
\theta,I\right)  \right)  . \label{eq:F-G-f0}%
\end{equation}
The origin on the $x,y$ plane is a hyperbolic fixed point of $F$ and $DF(0)$
has eigenvalues $\lambda,\lambda^{-1}$ for $\lambda=3-2\sqrt{2}$ (here we took
$\alpha=4$).

The torus%
\[
\Lambda_{0}=\left\{  \left(  0,0,\theta,I\right)  :\theta\in\mathbb{T}%
^{1},I\in\mathbb{T}^{1}\right\}\corTypo{,}
\]
is a normally hyperbolic invariant manifold for $f_{0}$ with the rates
{$\lambda$ and 
\correction{comment 23}{$\mu=(\sqrt{5}/2+1)/2$. }(The $\mu$ is the
norm of the matrix acting on $\theta,I$ in (\ref{eq:example}) for
$\varepsilon=0$.) 
% In fact, $\mu$ is the famous golden ratio.)
%\marginpar{Jorge: Added a small note remarking that $\mu$ is the golden ratio, as a curiousity.}

We consider the standard symplectic form%
\[
\omega=dx\wedge dy+d\theta\wedge dI.
\]
The maps $f_{\varepsilon}$ are $\omega$-symplectic and $\omega|_{\Lambda_{0}}
$ is non-degenerate.

%\begin{remark} {\em Note that with the coupling considered in (\ref{eq:example}), for $\varepsilon>0$ the manifold $\Lambda_{0}$ remains invariant, and the dynamics on it remains unchanged. We remark that our method does not depend on this property. Rather, we have chosen such a coupling so that it is evident that it is impossible to diffuse in $I$ by using the `inner dynamics' on the perturbed manifold. Our diffusion is driven by the `outer dynamics' along the homoclinic connections, which is clearly visible here. }
%\end{remark}

\begin{figure}[t]
\begin{center}
\includegraphics[width=7cm]{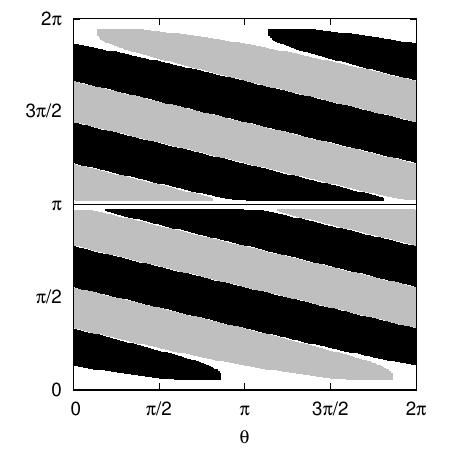}
\end{center}
\caption{The strips from Theorem \ref{th:shadowing-seq} for the map
(\ref{eq:example}), validated by our computer program. The $S^{+}$ is in black
and $S^{-}$ in \corTypo{grey}. The angle $\theta$ is on the horizontal axis and $I$ on
the vertical axis.}%
\label{fig:strips-CAP}%
\end{figure}

We prove the following result.

\begin{theorem}
[Diffusion in the generalized standard map]\label{th:main-example}%Let $\delta$ be an arbitrary, fixed, strictly positive number. 
\cor{There exists an $M>0$ such that }for every finite
sequence $\left\{  I^{l}\right\}  _{l=0}^{L}\subset\left[  \frac{1}{5}%
,\pi-\frac{1}{10}\right]  $ \correction{comment 36}{and for every sufficiently small $\varepsilon>0$, there exists }a sequence of
integers $n_{1}^{\varepsilon},\ldots,n_{L}^{\varepsilon}$\corTypo{, a point $z_{0}^{\varepsilon}$, and a sequence of points $z_{l}^{\varepsilon}:=f_{\varepsilon}^{n_{l}^{\varepsilon}}\left(  z_{l-1}^{\varepsilon}\right)
$ for $l=1,\ldots,L$, such that}
%and an orbit $z_{0}^{\varepsilon},\ldots,z_{L}^{\varepsilon}$, $z_{l}^{\varepsilon}=f_{\varepsilon}^{n_{l}^{\varepsilon}}\left(  z_{l-1}^{\varepsilon}\right)$, for $l=1,\ldots,L$, such that%
\[
\left\vert \pi_{I}z_{l}^{\varepsilon}-I_{l}\right\vert <\cor{\varepsilon M},\qquad\text{for
}l=0,\ldots,L.
\]

\end{theorem}

\begin{remark}
The proof of this theorem is based on computer assisted validation of the
assumptions of Theorem \ref{th:shadowing-seq}. The strips validated by our
computer program are depicted in Figure \ref{fig:strips-CAP}.
\end{remark}

\begin{remark}
\label{rem:second-I}From our validation of the strips (see Figure
\ref{fig:strips-CAP}) it follows also that we can take the interval $\left[
\pi+\frac{1}{10},2\pi-\frac{1}{5}\right]  $ instead of $\left[  \frac{1}%
{5},\pi-\frac{1}{10}\right]  $ in Theorem \ref{th:main-example}. Between these
two intervals though, at $I=\pi$ \corTypo{and $I=0$, }we have \corTypo{gaps}, which our method is unable to
overcome. In other words, we are not able to establish an orbit which would
start with $I\in(0,\pi)$ and finish with $I\in(\pi,2\pi)$ (and vice versa). %\correction{comment 36}{Our method uses only the outer dynamics for the diffusion argument. Possibly by also investigating the inner dynamics on $\Lambda_{\varepsilon}$ such gap could be overcome, but we have not investigated this issue.}
\end{remark}

\begin{remark}
The diffusion \corTypo{is in fact established }for intervals reaching in $I$ slightly
closer to $0$ and $\pi$ than stated in Theorem \ref{th:main-example}, where we
have rounded down the intervals. Our computer assisted proof based on the
parameterization method does a better job and produces higher (in $I$) strips
than the method based on cone conditions. This is because the parametrization
method leads to much higher accuracy of the bounds on the stable/unstable
manifolds, which is then reflected in better accuracy of the remaining
computations.
%The method based on cones is less involved.
Both methods though can be used to validate the $I$-intervals stated in
Theorem \ref{th:main-example} and Remark \ref{rem:second-I}.
\end{remark}

\begin{remark}
 If we take the parameter $\alpha$ in (\ref{eq:example}) closer to zero, then the unstable eigenvalues at the origin becomes smaller and the problem becomes more challenging numerically. This is because with weak hyperbolicity it is more difficult to obtain good estimates on the manifolds; also the homoclinic excursion takes more iterates. We have found that close to $\alpha=0.15$ the method based on cone conditions fails \corTypo{at establishing the bounds for the intersection of the stable/unstable manifolds}, but the parametrization method can still be applied.
\end{remark}

\subsection{\cor{Interval Newton Method\label{sec:interval-newton}}}
\correction{comment 27}{}
In our computer assisted proofs we use the following classical result, which
allows one to conclude from the existence of a ``good enough'' approximate
solution that there exists a true solution to a nonlinear system of
equations.

%Let $X$ be a subset of $\mathbb{R}^{k}$. We shall denote by $[X]$ an interval enclosure of the set $X$, that is, a set 
%\begin{equation*}
%\lbrack X]=\Pi_{i=1}^{k}[a_{i},b_{i}]\subset\mathbb{R}^{k}, 
%\end{equation*}
%such that $X\subset\lbrack X]$.

\correction{comment 17}{By an interval matrix $\mathbf{A}\subset \mathbb{R}^{n\times n}$, we mean a matrix whose elements are intervals.}
Let $\mathcal{F}:\mathbb{R}^{k}\rightarrow \mathbb{R}^{k}$ be a $C^{1}$
function and $U\subset \mathbb{R}^{k}$. We shall denote by $[D\mathcal{F}%
(U)] $ the interval enclosure of a Jacobian matrix on the set $U$. This
means that $[D\mathcal{F}(U)]$ is an interval matrix defined as 
\begin{equation*}
\lbrack D\mathcal{F}(U)]=\left\{ A\in \mathbb{R}^{k\times k}|A_{ij}\in \left[
\inf_{x\in U}\frac{d\mathcal{F}_{i}}{dx_{j}}(x),\sup_{x\in U}\frac{d\mathcal{%
F}_{i}}{dx_{j}}(x)\right] \text{ for }i,j=1,\ldots ,k\text{ }\right\} .
\end{equation*}
Let $\mathbf{A}\subset\mathbb{R}^{k\times k}$ be an interval matrix. We shall write $\mathbf{A}^{-1}$ to denote an interval matrix, for which if $A\in \mathbf{A}$ then $A^{-1}\in \mathbf{A}^{-1}$.
\begin{theorem}
\label{th:interval-Newton}\cite{Alefeld} (Interval Newton method) Let $%
\mathcal{F}:\mathbb{R}^{k}\rightarrow \mathbb{R}^{k}$ be a $C^{1}$ function
and $X=\Pi _{i=1}^{k}[a_{i},b_{i}]$ with $a_{i}<b_{i}$. If $[D\mathcal{F}%
(X)] $ is invertible and there exists an $x_{0}$ in $X$ such that%
\begin{equation*}
N(x_{0},X):=x_{0}-\left[ D\mathcal{F}(X)\right] ^{-1}f(x_{0})\subset X,
\end{equation*}%
then there exists a unique point $x^{\ast }\in X$ such that $\mathcal{F}%
(x^{\ast })=0.$
\end{theorem}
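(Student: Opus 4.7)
The plan is to follow the classical interval-Newton strategy: use a slope form of the mean value theorem to reduce everything to the invertibility of matrices in $[D\mathcal{F}(X)]$ for uniqueness, and to obtain existence via Brouwer's fixed point theorem applied to a Newton-type continuous self-map of $X$.

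The first step I would prove is a slope lemma. Since $X=\prod_{i=1}^{k}[a_{i},b_{i}]$ is convex and $\mathcal{F}$ is $C^{1}$, applying the mean value theorem component-wise produces, for any $u,v\in X$, a matrix $M(u,v)$ whose $i$-th row equals $\nabla \mathcal{F}_{i}(\xi_{i})$ for some $\xi_{i}$ on the segment $[v,u]\subset X$. By construction $M(u,v)\in [D\mathcal{F}(X)]$ and
\[
\mathcal{F}(u)-\mathcal{F}(v)=M(u,v)(u-v).
\]
With this identity uniqueness is immediate: if $\mathcal{F}(x_{1}^{\ast})=\mathcal{F}(x_{2}^{\ast})=0$ with both points in $X$, then $M(x_{1}^{\ast},x_{2}^{\ast})(x_{1}^{\ast}-x_{2}^{\ast})=0$, and invertibility of every matrix in $[D\mathcal{F}(X)]$ forces $x_{1}^{\ast}=x_{2}^{\ast}$. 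Moreover any zero $x^{\ast}\in X$ lies automatically in $N(x_{0},X)$, since $x^{\ast}=x_{0}-M(x^{\ast},x_{0})^{-1}\mathcal{F}(x_{0})\in x_{0}-[D\mathcal{F}(X)]^{-1}\mathcal{F}(x_{0})$, confirming that $N$ is a rigorous screen for roots.

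For existence I would fix a representative $A\in [D\mathcal{F}(X)]$ (for instance the midpoint of the interval matrix, which is invertible by hypothesis), and consider the continuous map $T:X\to\mathbb{R}^{k}$ defined by $T(x)=x-A^{-1}\mathcal{F}(x)$. Using the slope expansion around $x_{0}$,
\[
T(x)=\bigl(x_{0}-A^{-1}\mathcal{F}(x_{0})\bigr)+\bigl(I-A^{-1}M(x,x_{0})\bigr)(x-x_{0}),
\]
and the objective is to recognize the right-hand side as a valid selection from the interval vector $N(x_{0},X)\subset X$, giving the self-mapping $T(X)\subset X$. Brouwer's theorem then provides a fixed point $x^{\ast}\in X$ of $T$, and invertibility of $A^{-1}$ forces $\mathcal{F}(x^{\ast})=0$.

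The main obstacle I anticipate is precisely the self-mapping step $T(X)\subset X$: it requires careful interval-arithmetic bookkeeping to show that the residual $\bigl(I-A^{-1}M(x,x_{0})\bigr)(x-x_{0})$ can be absorbed into the width of the interval $[D\mathcal{F}(X)]^{-1}\mathcal{F}(x_{0})$, so that $T(x)$ is recovered as an element of $N(x_{0},X)$. An alternative route that avoids Brouwer altogether is to iterate the interval operator itself: set $X_{0}=X$, pick $x_{n}\in X_{n}$, and define $X_{n+1}=N(x_{n},X_{n})$. A contraction argument driven by the slope identity shows that these nested compact boxes shrink to a single point, which the characterization step already forces to be the unique zero of $\mathcal{F}$ in $X$.
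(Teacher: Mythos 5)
The paper does not prove this theorem: it is stated and attributed to \cite{Alefeld}, so there is no internal proof to compare against. Evaluating your argument on its own merits, the uniqueness half and the ``any zero lies in $N(x_{0},X)$'' observation are correct, but the existence half has a genuine gap that your proposed repairs do not close.

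The problem is that the fixed-representative map $T(x)=x-A^{-1}\mathcal{F}(x)$ is the \emph{Krawczyk} operator, not the Newton operator, and the two require different hypotheses. After expanding around $x_{0}$ you land in the set
\[
K(x_{0},X)=\bigl(x_{0}-A^{-1}\mathcal{F}(x_{0})\bigr)+\bigl(I-A^{-1}[D\mathcal{F}(X)]\bigr)(X-x_{0}),
\]
and the inclusion between the two candidate sets runs the wrong way for you: $N(x_{0},X)\subset K(x_{0},X)$, so $N(x_{0},X)\subset X$ does \emph{not} yield $T(X)\subset X$. The ``absorb the residual into the width of $[D\mathcal{F}(X)]^{-1}\mathcal{F}(x_{0})$'' step you flag as the main obstacle cannot succeed in general; it would amount to deducing the Krawczyk condition from the strictly tighter Newton condition. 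A separate issue is that your slope matrix $M(u,v)$, built row by row from the scalar mean value theorem, is not a continuous function of $(u,v)$ (the intermediate points $\xi_{i}$ have no canonical choice), so it cannot be fed into Brouwer even for a correctly chosen map. The standard fix resolves both problems at once: take the \emph{integral} slope $M(x,x_{0})_{ij}=\int_{0}^{1}\partial_{j}\mathcal{F}_{i}(x_{0}+t(x-x_{0}))\,dt$, which is continuous in $x$, lies in $[D\mathcal{F}(X)]$ because each entry is a mean of Jacobian entries over a segment in $X$, and still satisfies $\mathcal{F}(x)-\mathcal{F}(x_{0})=M(x,x_{0})(x-x_{0})$. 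Then define $\Phi(x)=x_{0}-M(x,x_{0})^{-1}\mathcal{F}(x_{0})$; this is continuous (compactness of $[D\mathcal{F}(X)]$ plus invertibility of all its members gives continuity of the inverse) and, crucially, $\Phi(x)\in N(x_{0},X)\subset X$ by hypothesis. Brouwer gives a fixed point $x^{\ast}$, and the slope identity then forces $\mathcal{F}(x^{\ast})=0$. Your closing alternative, iterating $X_{n+1}=N(x_{n},X_{n})$, is also incomplete: the hypotheses do not by themselves give a contraction rate, and showing the nested limit is a zero of $\mathcal{F}$ still needs the existence argument you were trying to replace.
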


\subsection{Proof of Theorem \ref{th:main-example}}

The proof of Theorem \ref{th:main-example} exploits computer assisted
validation methods for studying the local stable/unstable manifolds of fixed
points. We apply these for the map $F$ from (\ref{eq:F-G-f0}), i.e. the
unperturbed map acting on $x,y$. We take the origin as our fixed point of $F$.
The methods allow us to obtain an open interval $J\subset\mathbb{R}$ and
smooth functions $P_{u}:J\rightarrow\mathbb{R}^{2}$ and $P_{s}:J\rightarrow
\mathbb{R}^{2}$ such that $P_{u}\left(  J\right)  $ is the local unstable
manifold $W_{0}^{u}\left(  F,U\right)  $ of the origin for $F$, and
$P_{s}\left(  J\right)  $ is the local stable manifold $W_{0}^{s}\left(
F,U\right)  $ of the origin for $F$, for some neighbourhood $U$ of the origin.
We give a description of both methods in section \ref{sec:cap}. For the
purpose of this section it is enough that we can obtain explicit bounds for
such functions, as well as for their first derivatives. Moreover, the methods
allow us to obtain explicit bounds $C,\lambda\in\mathbb{R}$, $C,\lambda>0$
such that%
\begin{equation}%
\begin{array}
[c]{r}%
\left\Vert F^{i}\left(  P_{s}(x)\right)  \right\Vert \leq C\lambda
^{i}\smallskip\\
\left\Vert F^{-i}\left(  P_{u}(x)\right)  \right\Vert \leq C\lambda^{i}%
\end{array}
\qquad\text{for all }i\in\mathbb{N}\text{ and }x\in J.
\label{eq:C-bound-example}%
\end{equation}

The functions $P_{u}$ and $P_{s}$ give only a local description of the
unstable and stable manifolds. To establish their intersections we use the
following parallel shooting approach.  Define $\mathcal{F}:\overline
{J}\times B_{1}\times\ldots\times B_{M}\times\overline{J}\rightarrow
\mathbb{R}^{2M+2}$, where $B_{i}\subset\mathbb{R}^{2}$ are cartesian products
of two closed intervals, as
\begin{align*}
&  \mathcal{F}\left(  x,v_{0},\ldots,v_{M-1},y\right) \\
&  :=(P_{u}(x)-v_{0},F\left(  v_{0}\right)  -v_{1},\ldots,F\left(
v_{M-2}\right)  -v_{M-1},F\left(  \corTypo{v_{M-1}}\right)  -P_{s}(y)).
\end{align*}
If we establish the existence of a point $p^{\ast}=\left(  x^{\ast}%
,v_{0}^{\ast},\ldots,v_{M-1}^{\ast},y^{\ast}\right)  $ for which
\begin{equation}
\mathcal{F}\left(  p^{\ast}\right)  =0, \label{eq:F-zero-homoclinic}%
\end{equation}
then we have established a sequence of points $v_{0}^{\ast},\ldots,v_{M}%
^{\ast}$, where $v_{1}^{\ast}=P_{u}(x^{\ast})$ and $v_{M}^{\ast}=P_{s}(y)$,
along a homoclinic \corTypo{orbit }to zero. The bound on the solution of
(\ref{eq:F-zero-homoclinic}) can be established by using the interval Newton
theorem\footnote{An alternative could be to use the Newton-Krawczyk theorem or a version of the Newton-Kantorovich theorem. We use the interval Newton theorem because of its simplicity and the fact that it is sufficient for our needs in this particular example.}; see section \ref{sec:interval-newton}. This way, we obtain a
homoclinic orbit within a set of the form%
\begin{equation}
v_{i}^{\ast}\in\left[  x_{i}^{0}-r,x_{i}^{0}+r\right]  \times\left[  y_{i}%
^{0}-r,y_{i}^{0}+r\right]  \qquad\text{for }i=0,\ldots,M\corTypo{,}
\label{eq:bounds-points}%
\end{equation}
where $x_{i}^{0},y_{i}^{0}$ are written in Table \ref{table:homoclinic}. (Our
$M$ is equal to $10$.)\begin{table}[t]
\caption{Homoclinic orbit}
\centering
\begin{tabular}{l l l}
\hline\hline
$i$ & $x_i^0$ & $y_i^0$ \\ [0.5ex] 
\hline
0 & 0.003855589164542 & 0.003194074612644  \\
1 & 0.022471982225036 & 0.018616393060494  \\
2 & 0.130968738959384 & 0.108496756734347 \\
3 & 0.761844080808229 & 0.630875341848845 \\
4 & 4.153747139236954 & 3.391903058428725  \\
5 & 4.153747139236954 & 0.000000000000001 \\
6 & 0.761844080808229 & -3.391903058428725 \\
7 & 0.130968738959384 & -0.630875341848845 \\
8 & 0.022471982225036 & -0.108496756734347 \\
9 & 0.003855589164542 & -0.018616393060494 \\
10 & 0.000661514551898 & -0.003194074612644  \\
[1ex]
\hline
\end{tabular}
\label{table:homoclinic}
\end{table}

%\begin{table}[t]
%\caption{Homoclinic orbit}
%\centering
%\begin{tabular}{l l l}
%\hline\hline
%$i$ & $x_i^0$ & $y_i^0$ \\ [0.5ex] 
%\hline
%0 & 0.00385558916454 0.00000000016894 & 0.00319407461264 0.00000000014050 \\
%1 & 0.02247198222504  0.00000000098421 & 0.01861639306049  0.00000000081589 \\
%2 & 0.13096873895938  0.00000000573691 & 0.10849675673435  0.00000000475332 \\
%3 & 0.76184408080823  0.00000003363399  & 0.63087534184885  0.00000002789771  \\
%4 & 4.15374713923695 0.00000023738373 & 3.39190305842872  0.00000020375035 \\
%5 & 4.15374713923695  0.00000010602774 & 0.00000000000000  0.00000017348085 \\
%6 & 0.76184408080823  0.00000005803580 & -3.39190305842872  0.00000035157400 \\
%7 & 0.13096873895938  0.00000000989903 & -0.63087534184885  0.00000004813739\\
%8 & 0.02247198222504  0.00000000169819  & -0.10849675673435  0.00000000820146 \\
%9 & 0.00385558916454  0.00000000029144  & -0.01861639306049  0.00000000140737 \\
%10 & 0.00066151455190  0.00000000005008 & -0.00319407461264  0.00000000024198 \\
%[1ex]
%\hline
%\end{tabular}
%\label{table:homoclinic}
%\end{table}
%\marginpar{Jorge, the values in Table \ref{table:homoclinic} will be more precise from parametrisation method. Please swap the numbers in that table to your results.}
We use two methods to obtain bounds on $P_{u}$ and $P_{s}$. In the case of the
first method, by using cones, we obtain%
\begin{equation}
r=r_{\text{cones}}=1.5\cdot10^{-7}, \label{eq:r-cone}%
\end{equation}
and by using the second method, i.e. the parameterization method, we obtain
%\marginpar{Jorge, please insert your bound. Round it up to a nicenumber.}%
\begin{equation}
r=r_{\text{param}}=6.5\cdot10^{-15}. \label{eq:r-param}%
\end{equation}
(The bounds on our computer program are in fact often tighter and vary \correction{comment 38}{from
point to point}. Here we have rounded them up to write a uniform enclosure $r$
for all considered points.)

Since we use the interval Newton method as the tool for our validation we also
obtain transversality of obtained intersection of our manifolds. (Such results
are well known, see for instance \cite{MR3068557} for a similar approach. We
add the proof in the appendix to keep the work self-contained.)

\begin{lemma}
\label{lem:manifolds-intersect-standard}The manifolds $W_{0}^{u}\left(
F\right)  $ and $W_{0}^{s}\left(  F\right)  $ intersect transversally.
\end{lemma}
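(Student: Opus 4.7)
The plan is to exploit the fact that the interval Newton theorem delivers more than just a zero of $\mathcal{F}$: its hypothesis requires that the interval enclosure $[D\mathcal{F}(X)]$ is invertible, which in particular forces $D\mathcal{F}(p^{\ast})$ to be a nonsingular $(2M+2) \times (2M+2)$ matrix. The key observation is that invertibility of this Jacobian at the shooting zero is \emph{equivalent} to transversality of $W_0^u(F)$ and $W_0^s(F)$ at the corresponding homoclinic point.

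To make this precise, I would write $D\mathcal{F}(p^{\ast})$ in block form. With respect to the variables $(x, v_0, \ldots, v_{M-1}, y)$ the rows are
\[
\bigl(P_u'(x^{\ast}),\, -I,\, 0, \ldots, 0\bigr),\quad
\bigl(0, DF(v_0^{\ast}), -I, 0, \ldots, 0\bigr),\; \ldots,\;
\bigl(0, \ldots, 0, DF(v_{M-1}^{\ast}), -P_s'(y^{\ast})\bigr).
\]
Setting $D\mathcal{F}(p^{\ast})\cdot(\delta x, \delta v_0,\ldots,\delta v_{M-1}, \delta y) = 0$ and solving the first $M$ block rows recursively gives $\delta v_k = DF(v_{k-1}^{\ast})\cdots DF(v_0^{\ast}) P_u'(x^{\ast})\,\delta x$. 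The last block row then reduces to the $2\times 2$ linear system
\[
DF^{M}(v_0^{\ast})\, P_u'(x^{\ast})\,\delta x \;=\; P_s'(y^{\ast})\,\delta y.
\]
Hence $D\mathcal{F}(p^{\ast})$ is invertible if and only if the two vectors $DF^{M}(v_0^{\ast}) P_u'(x^{\ast})$ and $P_s'(y^{\ast})$ are linearly independent in $\mathbb{R}^2$.

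To finish I would identify these two vectors as bases of the tangent lines to $W_0^u(F)$ and $W_0^s(F)$ at the homoclinic point $v_M^{\ast} = P_s(y^{\ast}) = F^M(P_u(x^{\ast}))$. Indeed $P_s'(y^{\ast})$ spans $T_{v_M^{\ast}}W_0^s(F)$ by construction of $P_s$, while $P_u'(x^{\ast})$ spans $T_{v_0^{\ast}}W_0^u(F)$, and pushing forward by $DF^M(v_0^{\ast})$ (which sends $W_0^u(F)$ into itself) gives $T_{v_M^{\ast}}W_0^u(F)$. Linear independence of the two tangent vectors is therefore exactly the transversality condition $T_{v_M^{\ast}}W_0^u(F) \oplus T_{v_M^{\ast}}W_0^s(F) = \mathbb{R}^2$, and by the interval Newton assumption this holds at $p^{\ast}$, proving the lemma.

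I do not anticipate a serious obstacle: the only nontrivial point is the recursive elimination showing that the kernel of $D\mathcal{F}(p^{\ast})$ is controlled by a $2\times 2$ system, which is a routine computation with the block triangular structure. Everything else, including the identification of tangent directions under the chain rule applied to $F^M \circ P_u$, is standard.
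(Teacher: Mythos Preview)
Your proposal is correct and follows essentially the same route as the paper: both arguments exploit the invertibility of $D\mathcal{F}(p^{\ast})$ guaranteed by the interval Newton theorem and reduce it, via the block bidiagonal structure of the shooting Jacobian, to linear independence of $DF^{M}(v_0^{\ast})P_u'(x^{\ast})$ and $P_s'(y^{\ast})$ at the homoclinic point $v_M^{\ast}$. The paper phrases the final step as a contradiction (constructing an explicit kernel vector $V=(1,w_0,\ldots,w_{M-1},1/\alpha)$ from a hypothetical colinearity), whereas you state the full equivalence directly, but the content is the same.
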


\begin{proof}
The proof is given in \ref{app:trans}.
\end{proof}

Define the sequence
\[
\left(  x_{i}^{\ast},y_{i}^{\ast}\right)  :=F^{i}\left(  v_{0}^{\ast}\right)
\qquad\text{for all }i\in\mathbb{Z}.
\]
Note that $\left(  x_{i}^{\ast},y_{i}^{\ast}\right)  =v_{i}^{\ast}$, for
$i=0,\ldots,M$. We now show that for $\varepsilon=0$ \corTypo{the map }(\ref{eq:example})
has a well defined homoclinic channel with a global scattering map.

\begin{lemma}
\label{lem:scattering-map-defined}The set
\[
\Gamma=\left\{  \left(  x_{0}^{\ast},y_{0}^{\ast},I,\theta\right)
:I,\theta\in\mathbb{T}^{1}\right\}  ,
\]
is a homoclinic channel for $f_{0}$ and the associated scattering map $\sigma$
is globally defined and is the identity on $\Lambda_{0}$.
\end{lemma}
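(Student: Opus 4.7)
The plan is to exploit the product structure of $f_{0}$, namely that $f_{0}(x,y,\theta,I)=(F(x,y),G(\theta,I))$, which causes the verification of the homoclinic channel definition to factor completely into a statement about the planar map $F$ (provided by Lemma \ref{lem:manifolds-intersect-standard}) and a trivial statement about the rotation $G$. No hard analytic work is expected; the content is bookkeeping of invariant manifolds, fibers, and tangent spaces.

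First I would identify the invariant manifolds and fibers in product form. Since $I$ is preserved by $f_{0}$ and $\theta$ is translated by $I$, a direct computation with the definitions of $W^{u/s}$ shows
\[
W^{u}_{\Lambda_{0}}(f_{0})=W^{u}_{0}(F)\times \mathbb{T}^{2},\qquad W^{s}_{\Lambda_{0}}(f_{0})=W^{s}_{0}(F)\times \mathbb{T}^{2},
\]
and for any $z=(0,0,\theta_{0},I_{0})\in \Lambda_{0}$ the corresponding fibers are
\[
W^{u}_{z}(f_{0})=W^{u}_{0}(F)\times\{(\theta_{0},I_{0})\},\qquad W^{s}_{z}(f_{0})=W^{s}_{0}(F)\times\{(\theta_{0},I_{0})\}.
\]
In particular, every $p=(x_{0}^{\ast},y_{0}^{\ast},\theta,I)\in \Gamma$ lies simultaneously in $W^{u}_{z}(f_{0})$ and $W^{s}_{z}(f_{0})$ for the same base point $z=(0,0,\theta,I)$, so both wave maps act by $\Omega_{\pm}(p)=(0,0,\theta,I)$.

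Next I would verify the three conditions of Definition \ref{def:homoclinic-channel}. The tangent spaces split as
\[
T_{p}W^{u}_{\Lambda_{0}}(f_{0})=T_{(x_{0}^{\ast},y_{0}^{\ast})}W^{u}_{0}(F)\oplus T_{(\theta,I)}\mathbb{T}^{2},
\]
with the analogous formula for the stable side, and $T_{p}\Gamma=\{0\}\oplus T_{(\theta,I)}\mathbb{T}^{2}$. Transversality of $W^{u}_{0}(F)$ and $W^{s}_{0}(F)$ at $(x_{0}^{\ast},y_{0}^{\ast})$, guaranteed by Lemma \ref{lem:manifolds-intersect-standard}, gives both
\[
T_{(x_{0}^{\ast},y_{0}^{\ast})}W^{u}_{0}(F)\oplus T_{(x_{0}^{\ast},y_{0}^{\ast})}W^{s}_{0}(F)=\mathbb{R}^{2}
\]
and the zero intersection of these two planes, which translates directly into (\ref{eq:scatter-cond-1})--(\ref{eq:scatter-cond-2}). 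Conditions (\ref{eq:scatter-cond-3})--(\ref{eq:scatter-cond-4}) are equally immediate: the fiber tangent contributes only the $\mathbb{R}^{2}$ direction while $T_{p}\Gamma$ contributes only the $\mathbb{T}^{2}$ direction, so their sum fills the respective ambient tangent space. Finally, since $\Gamma$ and $\Lambda_{0}$ are both diffeomorphic to $\mathbb{T}^{2}$ via the second pair of coordinates $(\theta,I)$, and the wave maps $\Omega_{\pm}|_{\Gamma}$ are literally the projection $(x_{0}^{\ast},y_{0}^{\ast},\theta,I)\mapsto(0,0,\theta,I)$, they are global diffeomorphisms.

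The scattering map then follows at once: since $\Omega_{+}^{\Gamma}=\Omega_{-}^{\Gamma}$ pointwise on $\Gamma$, we obtain
\[
\sigma=\Omega_{+}^{\Gamma}\circ(\Omega_{-}^{\Gamma})^{-1}=\mathrm{id}_{\Lambda_{0}},
\]
which is globally defined. The only subtle point worth double-checking during the write-up is that the fibers in the product system really do have the simple form given above; this requires using that $G$ is an isometry in the $\theta$-direction for each fixed $I$, so no exponential separation or approach in the $\mathbb{T}^{2}$-factor can occur and only the dynamics of $F$ governs membership in $W^{u/s}_{z}(f_{0})$.
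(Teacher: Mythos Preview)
Your proposal is correct and follows essentially the same approach as the paper's own proof: both exploit the product structure $f_{0}=(F,G)$ to factor the invariant manifolds and fibers as products, deduce the tangent-space splittings from the planar transversality of Lemma~\ref{lem:manifolds-intersect-standard}, and read off that $\Omega_{+}^{\Gamma}=\Omega_{-}^{\Gamma}$ is the projection onto $\Lambda_{0}$, whence $\sigma=\mathrm{id}$. The only cosmetic difference is the order in which the three conditions of Definition~\ref{def:homoclinic-channel} are checked.
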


\begin{proof}
To show that $\Gamma$ is a homoclinic channel for $f_{0}$ we need to prove
points (i), (ii) and (iii) from Definition \ref{def:homoclinic-channel}.

We start by observing that for $p\in\Gamma$
\begin{equation}
T_{p}\Gamma=\left\{  \left(  0,0\right)  \right\}  \times\mathbb{R}^{2}.
\label{eq:tangent-Gamma}%
\end{equation}
Since $W_{0}^{u}\left(  F\right)  ,$ $W_{0}^{s}\left(  F\right)  $ intersect
transversally in $\mathbb{R}^{2}$ at $v_{0}^{\ast}$ we also have
\begin{align}
T_{v_{0}^{\ast}}W_{0}^{s}\left(  F\right)  \oplus T_{v_{0}^{\ast}}W_{0}%
^{u}\left(  F\right)   &  =\mathbb{R}^{2},\label{eq:Transversality-F}\\
T_{v_{0}^{\ast}}W_{0}^{s}\left(  F\right)  \cap T_{v_{0}^{\ast}}W_{0}%
^{u}\left(  F\right)   &  =\left\{  0\right\}  . \label{eq:Transversality-F-2}%
\end{align}
Since $W_{\Lambda}^{u}\left(  f_{0}\right)  =W_{0}^{u}\left(  F\right)
\times\mathbb{T}^{2}$ and $W_{\Lambda}^{s}\left(  f_{0}\right)  =W_{0}%
^{s}\left(  F\right)  \times\mathbb{T}^{2}$ we see that for $p\in\Gamma$
\begin{align}
T_{p}W_{\Lambda}^{u}\left(  f_{0}\right)   &  =T_{v_{0}^{\ast}}W_{0}%
^{u}\left(  F\right)  \times\mathbb{R}^{2},\label{eq:Tangent-Wu-f0}\\
T_{p}W_{\Lambda}^{s}\left(  f_{0}\right)   &  =T_{v_{0}^{\ast}}W_{0}%
^{s}\left(  F\right)  \times\mathbb{R}^{2}. \label{eq:Tangent-Ws-f0}%
\end{align}
From (\ref{eq:Transversality-F}), (\ref{eq:Tangent-Wu-f0}),
(\ref{eq:Tangent-Ws-f0}) and (\ref{eq:Transversality-F-2}),
(\ref{eq:Tangent-Wu-f0}), (\ref{eq:Tangent-Ws-f0}), (\ref{eq:tangent-Gamma})
we obtain, respectively,
\begin{align*}
T_{p}W_{\Lambda}^{s}\left(  f_{0}\right)  \corTypo{+} T_{p}W_{\Lambda}^{u}\left(
f_{0}\right)   &  =\mathbb{R}^{4},\\
T_{p}W_{\Lambda}^{s}\left(  f_{0}\right)  \cap T_{p}W_{\Lambda}^{u}\left(
f_{0}\right)   &  =\left\{  (0,0)\right\}  \times\mathbb{R}^{2}=T_{p}\Gamma,
\end{align*}
which proves (i) from Definition \ref{def:homoclinic-channel}.

Since any two points that converge to each other need to start with the same
values on $\theta,I$ we see that for any $z\in\Lambda_{0}$%
\begin{equation}
W_{z}^{u}\left(  f_{0}\right)    =W_{0}^{u}\left(  F\right)  \times\left\{
\pi_{\left(  \theta,I\right)  }z\right\}  ,\label{eq:fiber-full-1}
\end{equation}
\corTypo{and} 
\begin{equation}
W_{z}^{s}\left(  f_{0}\right)    =W_{0}^{s}\left(  F\right)  \times\left\{
\pi_{\left(  \theta,I\right)  }z\right\}  . \label{eq:fiber-full-2}%
\end{equation}
This means that the wave maps are of the form
\begin{equation}
\Omega_{\pm}\left(  \corTypo{x,y},\theta,I\right)  =\left(
0,0,\theta,I\right)  . \label{eq:wave-maps-example}%
\end{equation}
Clearly $\corTypo{(\Omega_{\pm})|_{\Gamma}}$ are diffeomorphisms as required in (iii) from
Definition \ref{def:homoclinic-channel}.

From (\ref{eq:fiber-full-1}), (\ref{eq:fiber-full-2}) we see that for any
$p\in\Gamma$ and $z\in\Lambda_{0}$%
\begin{equation}
T_{p}W_{z}^{u}\left(  f_{0}\right)     =T_{v_{0}^{\ast}}W_{0}^{u}\left(
F\right)  \times\left\{  \left(  0,0\right)  \right\}
,\label{eq:tangent-at-fiber-1}
\end{equation}
\corTypo{and}
\begin{equation}
T_{p}W_{z}^{s}\left(  f_{0}\right)     =T_{v_{0}^{\ast}}W_{0}^{s}\left(
F\right)  \times\left\{  \left(  0,0\right)  \right\} \corTypo{.}
\label{eq:tangent-at-fiber-2}%
\end{equation}
Combining (\ref{eq:tangent-Gamma}) with (\ref{eq:tangent-at-fiber-1}),
(\ref{eq:tangent-at-fiber-2}) and comparing with (\ref{eq:Tangent-Wu-f0}),
(\ref{eq:Tangent-Ws-f0}) gives%
\begin{equation}
T_{p}\Gamma\oplus T_{p}W_{z}^{u}\left(  f_{0}\right)     =T_{v_{0}^{\ast}%
}W_{0}^{u}\left(  F\right)  \times\mathbb{R}^{2}=T_{p}W_{\Lambda}^{u}\left(
F\right)  ,
\end{equation}
\corTypo{and}
\begin{equation}
T_{p}\Gamma\oplus T_{p}W_{z}^{s}\left(  f_{0}\right)     =T_{v_{0}^{\ast}%
}W_{0}^{s}\left(  F\right)  \times\mathbb{R}^{2}=T_{p}W_{\Lambda}^{s}\left(
F\right),
\end{equation}
which means that we have (ii) from Definition \ref{def:homoclinic-channel}. We
have established that $\Gamma$ is a homoclinic chanel. From
(\ref{eq:wave-maps-example}) we see that the associated scattering map
$\sigma$ is globally defined and is the identity on $\Lambda_{0}$.
\end{proof}

We validate \corTypo{the }strips $S^{+}$ and $S^{-}$ with \corTypo{the }shapes as in Figure
\ref{fig:strips-CAP}. These are composed of small overlapping rectangular
fragments. Below we introduce a lemma which we then apply on each such
rectangular part. First we introduce a notation. For $a,b\in\lbrack0,2\pi)$ we
define the interval $\left[  a,b\right]  \subset\mathbb{T}=\corTypo{\mathbb{R}\,\operatorname{mod}2\pi}$ as%
\begin{equation}
\left[  a,b\right]  =\left\{
\begin{array}
[c]{lll}%
\left\{  x\in\mathbb{T}:a\leq x\leq b\right\}  &  & \text{if }a\leq b,\\
\left\{  x\in\mathbb{R}:b\leq x\leq a+2\pi\right\}  \text{ mod }2\pi &  &
\text{if }b<a.
\end{array}
\right.  \label{eq:interval-on-torus}%
\end{equation}
We define $\left(  a,b\right)  \subset\mathbb{T}\mathbb{\ }$as the
interior of $\left[  a,b\right]  $.

Let $I_{1},I_{2}\in(0,2\pi)$ satisfy $I_{1}<I_{2}$. Let $s_{1},s_{2}%
\in\mathbb{T}$, and consider strips on $\Lambda_{0}$ of the form%
\begin{equation}
\{(0,0)\} \times\left[  s_{1},s_{2}\right]  \times\left[  I_{1},I_{2}\right]
. \label{eq:strip-between-s1-s2}%
\end{equation}
(In (\ref{eq:strip-between-s1-s2}) the interval $\left[  s_{1},s_{2}\right]  $
is in the sense (\ref{eq:interval-on-torus}).) We now have the following lemma.

%\marginpar{Maciej: I glued some of these constants together to make it easier to read. What do you think?}

\begin{lemma}
\label{lem:strip-reformulation-example}
%Let%
%\[
%B:=\max_{I\in\left[  I_{1},I_{2}\right]  }\frac{2}{\sqrt{(\cos I-1)^{2}+(\sin
%I)^{2}}},
%\]
%\[
%B_1:= \Big(2+ \frac{1+\lambda}{1-\lambda}\Big)B,
%\]
%and
%\[
%B_2:= \Big(\frac{2(1+\lambda)}{B_1(1-\lambda)}\Big).
%\]
If
\begin{equation}
\sum_{i=0}^{M-1}\sin(x_{i}^{\ast})\cos(\theta+iI)>3\frac{1+\lambda}{1-\lambda
}C \corTypo{,}\label{eq:key-assumption-for-example}
\end{equation}
and if for every $\left(  \theta,I\right)  \in\left[  s_{1},s_{2}\right]
\times\left[  I_{1},I_{2}\right]  $ there exists an $m\geq M$ (the $m$ can
depend on the choice of $\left(  \theta,I\right)  $) such that%
\begin{equation}
\theta+mI\in\left(  s_{1},s_{2}\right)  ,\label{eq:point-returns-to-strip}%
\end{equation}
then assumptions of Theorem \ref{cor:strip} hold true for our map
(\ref{eq:example}) on the strip (\ref{eq:strip-between-s1-s2}).
\end{lemma}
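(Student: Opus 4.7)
The plan is to unpack what the assumptions of Theorem~\ref{cor:strip} reduce to in this specific example, and then check them one by one using the homoclinic orbit data from Table~\ref{table:homoclinic} and the scattering map identified in Lemma~\ref{lem:scattering-map-defined}. I take $S^{+}=\{(0,0)\}\times[s_{1},s_{2}]\times[I_{1},I_{2}]$, the single scattering map $\sigma$ from Lemma~\ref{lem:scattering-map-defined} (so $L=1$ and $\sigma$ is the identity on $\Lambda_{0}$), and for each point $z=(0,0,\theta,I)\in\overline{S^{+}}$ the homoclinic point $x=(v_{0}^{\ast},\theta,I)$.

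First I would verify the general-setup hypotheses (\ref{eq:Lip-g-assumption}) and (\ref{eq:contraction-expansion-bounds}). Since $\pi_{I}g(x,y,\theta,I)=\sin(x)\cos(\theta)$, a direct bound on $\nabla\pi_{I}g$ gives a Lipschitz constant $L_{g}\le 2$ (in fact $L_{g}\le 1$ in the Euclidean norm). The rate bounds (\ref{eq:contraction-expansion-bounds}) come from the fibered structure (\ref{eq:fiber-full-1})--(\ref{eq:fiber-full-2}) together with the manifold bounds (\ref{eq:C-bound-example}) for $F$: any point in $W_{z}^{u}(f_{0},U)$ has the form $(P_{u}(t),\theta,I)$ whose backward orbit is contracted by $C\lambda^{|n|}$, and similarly for the stable fibre.

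Next I would check the two conditions in Theorem~\ref{cor:strip} pointwise on $\overline{S^{+}}$. For $z=(0,0,\theta,I)$, $\sigma(z)=z$, and since $G(\theta,I)=(\theta+I,I)$ the inner iterate is $f_{0}^{m}(z)=(0,0,\theta+mI,I)$; hypothesis (\ref{eq:point-returns-to-strip}) supplies an $m\ge M$ for which $\theta+mI\in(s_{1},s_{2})$, so $f_{0}^{m}\circ\sigma(z)\in S^{+}$, giving condition (1). The homoclinic point $x=(v_{0}^{\ast},\theta,I)$ automatically satisfies $x\in W_{z}^{u}(f_{0},U)\cap W_{\sigma(z)}^{s}(f_{0})$ by (\ref{eq:fiber-full-1})--(\ref{eq:fiber-full-2}) and the fact that $v_{0}^{\ast}\in W_{0}^{u}(F)\cap W_{0}^{s}(F)$, and $f_{0}^{m}(x)=(v_{m}^{\ast},\theta+mI,I)$ with $v_{m}^{\ast}=F^{m-M}(v_{M}^{\ast})\in W_{0}^{s}(F,U)$, confirming $f_{0}^{m}(x)\in W_{f_{0}^{m}(\sigma(z))}^{s}(f_{0},U)$.

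Finally I would tie the sum condition (\ref{eq:key-assumption-again}) to the computable quantity (\ref{eq:key-assumption-for-example}). Since $\pi_{I}g(f_{0}^{j}(x))=\sin(x_{j}^{\ast})\cos(\theta+jI)$, the sum splits as
\[
\sum_{j=0}^{m-1}\sin(x_{j}^{\ast})\cos(\theta+jI)=\sum_{j=0}^{M-1}\sin(x_{j}^{\ast})\cos(\theta+jI)+\sum_{j=M}^{m-1}\sin(x_{j}^{\ast})\cos(\theta+jI),
\]
and the tail is bounded using $|\sin(x_{j}^{\ast})|\le\|v_{j}^{\ast}\|\le C\lambda^{j-M}$ from (\ref{eq:C-bound-example}), giving $\bigl|\sum_{j\ge M}\sin(x_{j}^{\ast})\cos(\theta+jI)\bigr|\le C/(1-\lambda)$. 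Combining the assumed lower bound $3\tfrac{1+\lambda}{1-\lambda}C$ on the head with this tail estimate and $L_{g}\le 2$ yields
\[
\sum_{j=0}^{m-1}\sin(x_{j}^{\ast})\cos(\theta+jI)-\tfrac{1+\lambda}{1-\lambda}L_{g}C>\tfrac{C}{1-\lambda}\bigl(3(1+\lambda)-1-2(1+\lambda)\bigr)=\tfrac{C\lambda}{1-\lambda}>0,
\]
which is precisely (\ref{eq:key-assumption-again}). The main obstacle is the last calculation, where the coefficient $3$ has to absorb both the tail of the Melnikov-type sum beyond $M$ iterates and the Lipschitz constant $L_{g}$; that is why the hypothesis is stated with this particular constant rather than the cleaner $\tfrac{1+\lambda}{1-\lambda}L_{g}C$.
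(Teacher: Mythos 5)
Your proof is correct and follows essentially the same approach as the paper's: condition (1) of Theorem~\ref{cor:strip} reduces to (\ref{eq:point-returns-to-strip}) because $\sigma=\mathrm{id}$ and $G$ is the shear $(\theta,I)\mapsto(\theta+I,I)$, and condition (2) reduces to (\ref{eq:key-assumption-for-example}) by the same head-plus-tail split of the Melnikov sum, with the tail controlled via $|\sin(x_{j}^{\ast})|\le C\lambda^{j-M}$ from (\ref{eq:C-bound-example}) and $L_{g}=2$. Your tail bound $C/(1-\lambda)$ is marginally sharper than the paper's $C(1+\lambda)/(1-\lambda)$, producing the explicit slack $C\lambda/(1-\lambda)$, but the decomposition, the choice of homoclinic point $x=(v_{0}^{\ast},\theta,I)$, and the final verification are the same.
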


\begin{proof}
Condition (\ref{eq:strip-return-cond}) follows from
(\ref{eq:point-returns-to-strip}). We need to validate
(\ref{eq:key-assumption-again}).
%First notice that for every $n\in\mathbb{N}$
%and every $I\in\left[  I_{1},I_{2}\right]  $ we have $|\sum_{i=1}^{n}%
%\cos(\theta+iI)|\leq B.$
Since $v_{M}^{\ast}\in P_{s}(J),$ from (\ref{eq:C-bound-example}) it follows
that $|x_{m}^{\ast}|<C\lambda^{m-M},$ for $m\geq M$. 

Consider an arbitrary fixed $\left(  \theta,I\right)  \in\left[  s_{1}%
,s_{2}\right]  \times\left[  I_{1},I_{2}\right]  $ and let
\[
C_{m}:=\sum_{j=0}^{m-1}\sin(x_{j}^{\ast})\cos(\theta+jI).
\]
Since for $j\geq M$ we know that $|x_{j}^{\ast}|<C\lambda^{j-M},$ we see that
for $m\geq M$
\begin{equation}
|C_{m}-C_{M}|\leq\sum_{j=M}^{m-1}\left\vert \sin(x_{j}^{\ast})\right\vert
\left\vert \cos(\theta+jI)\right\vert \leq C\frac{1-\lambda^{m-M}}{1-\lambda
}<C\frac{1+\lambda}{1-\lambda}.\label{eq: sum_tail_bound}%
\end{equation}

Observe that the map $\left(  x,y,\theta,I\right)  \rightarrow\sin
(x)\cos\left(  \theta\right)  $ is Lipschitz with the constant $L_{g}=2$.

For $z=\left(  0,0,\theta,I\right)  \in\{(0,0)\}\times\left[  s_{1}%
,s_{2}\right]  \times\left[  I_{1},I_{2}\right] $, consider $x=\left(
x_{0}^{\ast},y_{0}^{\ast},\theta,I\right)  \in W_{z}^{u}\left(  f_{0}%
,U\right)  \cap W_{\sigma_{\alpha}\left(  z\right)  }^{s}\left(  f_{0}\right)
$. Since $\left(  x_{0}^{\ast},y_{0}^{\ast}\right)  =v_{0}^{\ast}\in P_{u}(J)$
and $v_{M}^{\ast}\in P_{s}(J)$, for every $m\geq N$, $f_{0}^{m}\left(
x\right)  \in W_{f_{0}^{m}\left(  \sigma_{\alpha}\left(  z\right)  \right)
}^{s}(f_{0},U)$. Also, for every $m\geq M$, by using
(\ref{eq:key-assumption-for-example}) and (\ref{eq: sum_tail_bound}), we
obtain
\begin{align*}
\sum_{j=0}^{m-1}\pi_{I}g\left(  f_{0}^{j}\left(  x\right)  \right)
-\frac{1+\lambda}{1-\lambda}L_{g}C &  =\sum_{j=0}^{m-1}\sin(x_{j}^{\ast}%
)\cos(\theta+jI)-2\frac{1+\lambda}{1-\lambda}C\\
&  \geq C_{M}-\left\vert C_{m}-C_{M}\right\vert -2\frac{1+\lambda}{1-\lambda
}C\\
&  \geq C_{M}-3\frac{1+\lambda}{1-\lambda}C\\
&  >0,
\end{align*}
which ensures
(\ref{eq:key-assumption-again}). This finishes our proof.
\end{proof}

\begin{remark}
\label{rem:mirror-S-}
A mirror result lets us validate assumptions
of Theorem \ref{cor:strip_down}. The only difference is that instead of
(\ref{eq:key-assumption-for-example}), we require
\[
\sum_{i=0}^{N-1}\sin(x_{i}^{\ast})\cos(\theta+iI)<-3\frac{1+\lambda}%
{1-\lambda}C.
\]
\end{remark}

We are now ready to prove Theorem \ref{th:main-example}.

\begin{proof}
[Proof of Theorem \ref{th:main-example}]By Lemma
\ref{lem:manifolds-intersect-standard} the stable and unstable manifolds of
the origin for the map $F$ intersect transversally. Moreover, we have explicit
bounds for a homoclinic orbit along this intersection, written in Table
\ref{table:homoclinic} and (\ref{eq:bounds-points}--\ref{eq:r-param}). This
means that, by Lemma \ref{lem:scattering-map-defined}, the scattering map for
the unperturbed system is well defined.

Using the bounds from Table 1 and (\ref{eq:bounds-points}--\ref{eq:r-param}),
which give an enclosure of a finite fragment of the homoclinic orbit, and
together with the aid of Lemma \ref{lem:strip-reformulation-example}, our
computer program constructs the strip $S^{+}$ from Figure \ref{fig:strips-CAP}%
. This strip is a union of overlapping rectangles, for which assumptions of
Theorem \ref{cor:strip} are satisfied. We use a mirror result to Lemma
\ref{lem:strip-reformulation-example} (see Remark \ref{rem:mirror-S-}), to
construct the strip $S^{-}$ from Figure \ref{fig:strips-CAP}, for which
\corTypo{assumptions }of Theorem \ref{cor:strip_down} are satisfied. We also validate
that for these two strips conditions 1. and 2. of Theorem
\ref{th:shadowing-seq} are \corTypo{fulfilled}.

After such validation the result follows from Theorem \ref{th:shadowing-seq}.
\end{proof}

The computer assisted proof using cone conditions for the validation of intersections of the manifolds was performed with the CAPD\footnote{Computer Assisted Proofs in Dynamics: http://capd.ii.uj.edu.pl} library \cite{capd}. The parameterization method approach was implemented in Matlab. The source code is available on the web page of the corresponding author.

\section*{Acknowledgements}We would like to thank the anonymous Reviewers for their comments, suggestions and corrections, which helped us improve our paper.
\appendix

%TCIDATA{OutputFilter=latex2.dll}
%TCIDATA{Version=5.00.0.2606}
%TCIDATA{LaTeXparent=0,0,MMFedit.tex}

\section{Modification of a system with a normally hyperbolic invariant
cylinder to one with a normally hyperbolic invariant torus\label{sec:gluing}}

\begin{figure}[ptb]
\begin{center}
\includegraphics[height=4cm]{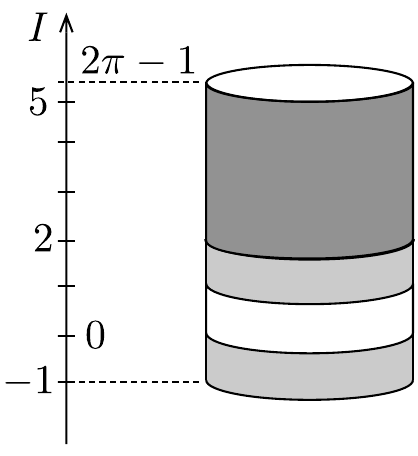}
\end{center}
\caption{For $I\in[0,1]$ the system is not modified (white area). In the light grey regions
the system is modified by the `bump' function. \cor{For $I\in[2,2\pi -1]$ we `freeze' the system to be the unperturbed map, which is represented by the dark grey area.}}%
\label{fig:gluing}%
\end{figure}

\mymarginpar{\hyperlink{Return_comment 18}{comment 18}}
\cor{
Consider a family of maps $f_{\varepsilon}:\mathbb{R}^{2d}%
\times\mathbb{R\times T}^{1}\rightarrow\mathbb{R}^{2d}\times\mathbb{R\times
T}^{1}$ to be a parameter dependent family of the $2\pi$ time shift along the trajectory maps (or section-to-section maps), as described in section \ref{sec:main}, for a
Hamiltonian system
\[
x^{\prime}=J\nabla_x\left(  H\left(  x\right)  +\varepsilon G(x,t)\right),
\]
where $H:\mathbb{R}%
^{2d+2} \rightarrow \mathbb{R}$ and $G:\mathbb{R}%
^{2d+2} \times \mathbb{T}\rightarrow \mathbb{R}$.
We consider $I(x)=H(x)$ as the preserved quantity for $\varepsilon=0$, and our
coordinates are $x=\left(  u,s,I,\theta\right)  \in\mathbb{R}^{2d}%
\times\mathbb{R\times T}^{1}$. We assume that for $f_{\varepsilon=0}$ the manifold $\Lambda_0=\{0\}\times\R\times\T$ is an invariant cylinder. 

We will modify the system so that we obtain a
map $\tilde{f}_{\varepsilon}$ defined on $\mathbb{R}^{2d}\times\mathbb{T}^{2}$
for which
\[
f_{\varepsilon}\left(  u,s,I,\theta\right)  =\tilde{f}_{\varepsilon}\left(
u,s,I,\theta\right)  \qquad\text{for }I\in\left[  0,1\right]  .
\]

We start by explaining the idea,
which is depicted in Figure \ref{fig:gluing}. For $I\in\left[  0,1\right]  $
we leave the system as it is. We then employ a `bump' function so that at the
edges of the domain $I\in\left[  -1,2\right]  $, i.e. for $I=2$ and
$I=-1=2\pi-1$, we have $\tilde{f}_{\varepsilon}=f_{0}$. For the remaining
$I\in (2,2\pi-1)$ we `freeze' the system taking $\tilde{f}_{\varepsilon}=f_{0}$.

In detail, we consider a smooth `bump' function\footnote{For
instance $b(x)=\exp(-\left(  1-x^{2}\right)  ^{-1})$ for $x\in\left[
-1,0\right]  $, $b(x)=1$ for $x\in\left[  0,1\right]  $, $b(x)=\exp(-\left(
1-( 1-x)^{2} \right)  ^{-1})$ for $x\in\left[  1,2\right]  $ and zero
otherwise.} $b:\mathbb{R}\rightarrow\left[  0,1\right]  $ for which%
\begin{align*}
b\left(  I\right)   &  =0\qquad\text{for }I\in\mathbb{R}\setminus\left(
-1,2\right), \\
b\left(  I\right)   &  =1\qquad\text{for }I\in\left[  0,1\right],
\end{align*}
and take $\tilde f_{\varepsilon}$ to be the $2\pi$ time shift maps (or section-to-section maps) for a modified ODE
\[
x^{\prime}=J\nabla_x\left(  H\left(  x\right)  +b(H(x)) \varepsilon G(x,t)\right).
\]
Such modification can allow
us to apply our results directly in the below considered cases:

\begin{case} If on $\Lambda_{\varepsilon}$ we obtain for $\tilde f_{\varepsilon}|_{\Lambda_{\varepsilon}}$ a Cantor set of KAM tori, then the region on $\Lambda_{\varepsilon}$ between every two invariant tori  constitutes an invariant set for $\tilde f_{\varepsilon}$. This means that we can obtain diffusing orbits by means of Theorems \ref{th:main}, \ref{cor:strip}, \ref{cor:strip_down} and \ref{th:shadowing-seq}  for $\tilde f_{\varepsilon}$ which are in $\{I\in (0,1)\}$. (This is because orbits in $\Lambda_{\varepsilon}$ resulting from Poincar\'e recurrence, which are used for the shadowing construction from Theorem \ref{th:shadowing}, will be contained between KAM tori, which we can choose to be in $\{I\in (0,1)\}$.) Since in $\{I\in (0,1)\}$ the maps $\tilde f_{\varepsilon}$ and $f_{\varepsilon}$ coincide, we obtain diffusing orbits for $f_{\varepsilon}$.
\end{case}

\begin{case}
If we are interested in proving that for sufficiently small $\varepsilon>0$
there exist orbits that change in $I$ by more than $\frac{1}{3}$, then we can
use the following dichotomy:

\begin{enumerate}
\item There exists an orbit in $\Lambda_{\varepsilon}$ which changes in $I$ by more than $\frac{1}{3}$ for the map $f_{\varepsilon}|_{\Lambda_{\varepsilon}}$; then there is nothing to prove.

\item There are no orbits in $\Lambda_{\varepsilon}$, which diffuse in $I$ by more than $\frac{1}{3}$ for the map $f_{\varepsilon}|_{\Lambda_{\varepsilon}}$. Then orbits of $f_{\varepsilon}$ starting from
$\Lambda_{\varepsilon}\cap \left \{I\in\left[  \frac{1}{3},\frac{2}{3}\right]\right \}$ will not leave $\Lambda_{\varepsilon}\cap\left\{ I\in\left( 0,1\right)\right\}$. On $\left\{ I\in\left( 0,1\right)\right\}$ we know that $f_{\varepsilon}=\tilde f_{\varepsilon}$. We
can use Theorem \ref{th:main} or Theorem \ref{cor:strip} to construct orbits that change in $I$ by more than $\frac{1}{3}$ for $\tilde f_{\varepsilon}$. The constructed orbits will remain in $\{I\in (0,1)\} $. (This is because orbits in $\Lambda_{\varepsilon}$ for Poincar\'e recurrence used for the construction from Theorem \ref{th:shadowing} will be contained in $\{I\in (0,1)\}$.) On $\{I\in (0,1)\}$ we know that $\tilde f_{\varepsilon}=  f_{\varepsilon}$, so any orbit changing in $I$ by $\frac{1}{3}$ for the map $\tilde f_{\varepsilon}$ does so also for $f_{\varepsilon}$.

\end{enumerate}
\end{case}

\begin{remark}
The second case is easily generalised to higher dimensions. We either 1) diffuse along some action on $\Lambda_{\varepsilon}$, or 2) diffuse in the selected action which is controlled by Theorem \ref{th:main} or Theorem \ref{cor:strip}.

The first case is not easily generalised, since we can have gaps between KAM tori in higher dimensions. %Applying Theorems \ref{th:main}, \ref{cor:strip}, \ref{cor:strip_down} and \ref{th:shadowing-seq}  can therefore be a problem if we do not control the dynamics in the remaining actions.
\end{remark}
}

%The important issue is that if we prove diffusion in $I$ over the range $I\in\left[  0,1\right]  $ for $\tilde{f}_{\varepsilon}$, then this implies diffusion for $f_{\varepsilon}$, as for $I\in\left[  0,1\right]  $ the systems are the same. Since all the assumptions of Theorem \ref{th:main} are for $\varepsilon=0$, the discussion here is of an abstract nature. The assumptions of Theorems \ref{th:main} need to be validated for $f_{0}$ over $I\in\left[0,1\right]  $, and then we just need to keep in mind that the construction mentioned here is valid, but it does not need to be performed in practice.

%\begin{remark} \label{rem:generalisation3} \marginpar{zastanow sie co z ta uwaga} \emph{A similar construction works in the case when $I$ is higher dimensional; say $I\in[0,1]^{k}$. In this case we control one action as mentioned in Remarks \ref{rem:generalisation1}, \ref{rem:generalisation2}, but we need to make sure that the strips for Theorems \ref{cor:strip}, \ref{cor:strip_down}, \ref{th:shadowing-seq} do not intersect the boundary of $\{I\in[0,1]^{k}\}$ on any action coordinate, except the one action which we control. Otherwise the dynamics could escape $\{I\in[0,1]^{k}\}$ through the remaining actions, and the results obtained for the artificial `glued' system would not need to be realized by the true system. }
%\end{remark}

\section{Invariant manifolds and their intersections\label{sec:manifolds}}
\label{sec:cap} 

We now discuss computation of the
 local stable and \corTypo{local }unstable manifolds, with a focus on obtaining
 mathematically rigorous 
computer assisted error bounds on all approximations.\correction{comment 39}{}

\cor{We focus on two methods, which we used for two independent validations of the intersection of the manifolds. The first is based on cone cone conditions, and the second on the parameterisation method. The first is simpler, and we provide full details, simplifying the results to our particular setting. For the parameterisation method we restrict to Remark \ref{rem:parmMethod} where we point our reader to the relevant references.}

%%%%%%%%%%%

%\subsection{Cone conditions for the (un)stable manifold} \label{sec:cones}

Below described method of cones is based on the more general results from \cite{Z}.
We reformulate these for our particular setting, simplifying and giving sketches of
proofs, in order to keep the paper self-contained.

Let $F$ be the map (\ref{eq:F-G-f0}), i.e. the unperturbed map $f_{0}$ acting
on $x,y$\corTypo{; where we recall that $\alpha = 4$. }Let $\mathcal{P}\in\mathbb{R}^{2\times2}$ and $\tilde{F}:\mathbb{R}^{2}\rightarrow\mathbb{R}^{2}$ 
 be defined as follows
\[
\mathcal{P}:=\left(
\begin{array}
[c]{cc}%
1+\sqrt{2} & 1-\sqrt{2}\\
2 & 2
\end{array}
\right)  ,\qquad\tilde{F}\left(  z\right)  :=\mathcal{P}^{-1}F\left(
\mathcal{P}z\right).
\]
The matrix $\mathcal{P}$ is the coordinate change to Jordan form for $DF(0)$
and $\tilde{F}(z)$ is the map expressed in local coordinates, which
\corTypo{diagonalize }the stable and unstable directions at the origin, i.e.
$D\tilde{F}\left(  0\right)  =\mathrm{diag}\left(  (3-2\sqrt{2})^{-1}%
,3-2\sqrt{2}\right)$.  We refer to these as the local coordinates, \corTypo{and write }%as $\tilde{F}$ is expressed as 
$z=\left(  u,s\right)  $. (The $u$ stands for `unstable' and
$s$ for `stable'.)

Let $\mathcal{L\in}\mathbb{R}$ be a fixed constant satisfying $\mathcal{L}>0$
and define $C:\mathbb{R}^{2}\rightarrow\corTypo{\mathbb{R}}$ as
\[
C\left(  u,s\right)  =\mathcal{L}\left\vert u\right\vert -\left\vert
s\right\vert .
\]
For $z\in\mathbb{R}^{2}$ we define the cone at $z$ as $C^{+}\left(  z\right)
:=\left\{  v:C\left(  z-v\right)  \geq0\right\}  $ (see Figure \ref{fig:cone}%
). Let $r>0$ be fixed, $J:=\left[  -r,r\right]  \subset\mathbb{R}$ and let
$B\subset\mathbb{R}^{2}$ be the rectangle $B:=\left[  -r,r\right]
\times\left[  -\mathcal{L}r,\mathcal{L}r\right]  .$

\begin{figure}[ptb]
\begin{center}
\includegraphics[width=8.5cm]{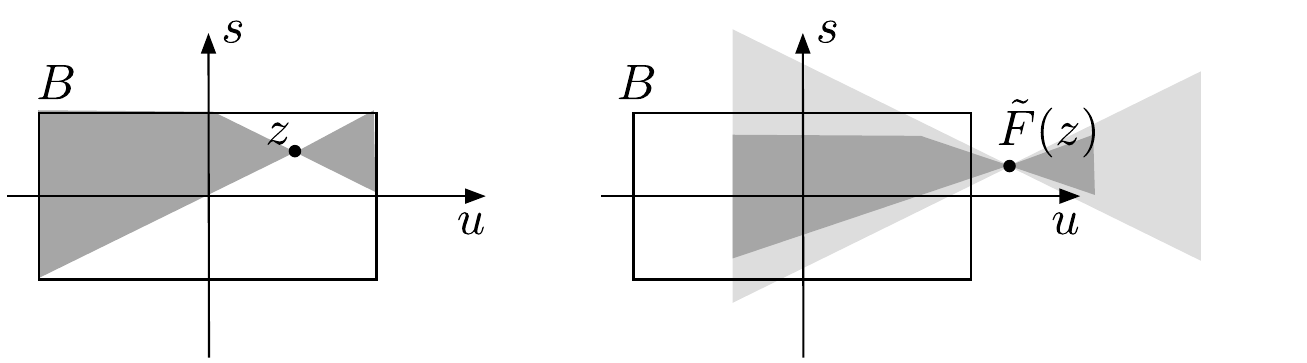}
\end{center}
\caption{The cone at $z$ intersected with $B$ (in dark grey) is mapped into
the cone at $\tilde F (z)$ (in light grey).}%
\label{fig:cone}%
\end{figure}

\begin{definition}
We say that $\tilde{F}$ satisfies cone conditions in $B$ if for every $z\in B$
we have (see Figure \ref{fig:cone})%
\[
\tilde{F}\left(  C^{+}(z)\cap B \right)  \subset C^{+}\left(  \tilde{F}\left(
z\right)  \right)  .
\]

\end{definition}

We have the following lemma, which gives bounds on the unstable manifold in the local coordinates.

\begin{lemma}
\label{lem:cone-cond}If $\tilde{F}$ satisfies cone conditions in $B$, and
there exists a $\lambda<1$ such that for every $z\in C^{+}\left(  0\right)  $
we have
\begin{equation}
|\pi_{u}\tilde{F}(u,s)|>\lambda^{-1}\left\vert u\right\vert
,\label{eq:expansion-in-cone}%
\end{equation}
then there exists a smooth function $w:J\rightarrow\left[  -r\mathcal{L}%
,r\mathcal{L}\right]  $, such that
\[
W_{0}^{u}(\tilde{F},B)=\left\{  \left(  u,w(u)\right)  :u\in J\right\}.
\]
Moreover, $\left\vert \frac{d}{du}w(u)\right\vert \leq\mathcal{L}$ and for
every $u\in J$%
\begin{equation}
\left\Vert \tilde{F}^{-n}\left(  u,w(u)\right)  \right\Vert <\lambda^{n}%
\sqrt{1+\mathcal{L}^{2}}\left\vert u\right\vert .\label{eq:back-contr-bound}%
\end{equation}

\end{lemma}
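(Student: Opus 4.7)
My approach would be to realize $w$ as the unique fixed point of a backward graph transform acting on
\[
\mathcal{W} := \bigl\{\varphi \in C^0(J, [-r\mathcal{L}, r\mathcal{L}]) : \varphi(0) = 0,\ \mathrm{Lip}(\varphi) \leq \mathcal{L}\bigr\},
\]
equipped with the uniform norm. The cone $C^+(0)$ is precisely the set of secants of graphs $\Gamma_\varphi$ of $\varphi \in \mathcal{W}$, so the hypotheses are exactly designed to make $\mathcal{W}$ the natural invariant class. Concretely, for $\varphi \in \mathcal{W}$, the expansion condition (\ref{eq:expansion-in-cone}) makes $u \mapsto \pi_u \tilde F(u, \varphi(u))$ an expansive homeomorphism of $J$ onto an interval strictly containing $J$; I would then define $\mathcal{T}\varphi(u) = \pi_s \tilde F(u', \varphi(u'))$, where $u' \in J$ is the unique solution of $\pi_u \tilde F(u', \varphi(u')) = u$. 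The cone condition, applied to the pair of points on $\Gamma_\varphi$ above $u_1', u_2'$, guarantees that the secants of $\Gamma_{\mathcal{T}\varphi}$ lie in $C^+(0)$, so $\mathrm{Lip}(\mathcal{T}\varphi) \leq \mathcal{L}$, hence $\mathcal{T}: \mathcal{W} \to \mathcal{W}$.

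Next I would verify that $\mathcal{T}$ is a contraction with factor $\sim \lambda$: the preimage shift $|u_1' - u_2'|$ is controlled by $\lambda \|\varphi_1 - \varphi_2\|_\infty$ thanks to the expansion factor $\lambda^{-1}$, and composing with $\tilde F$ (which is $\lambda$-contracting in the $s$-direction near the origin) yields a $\lambda$-contraction in sup norm. Banach's fixed point theorem then produces a unique $w \in \mathcal{W}$, and the forward invariance $\tilde F(\Gamma_w) \supset \Gamma_w$ is built into the construction. Using the expansion condition at each point $(u, w(u)) \in C^+(0)$, the backward iterates of $\Gamma_w$ shrink geometrically to the origin, so $\Gamma_w \subset W_0^u(\tilde F, B)$.

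For the reverse inclusion $W_0^u(\tilde F, B) \subset \Gamma_w$, I would argue by the cone-condition uniqueness. If $z_1, z_2 \in W_0^u(\tilde F, B)$ had $z_2 - z_1 \notin C^+(0)$, then because all backward iterates remain in $B$, the contrapositive of the cone condition applied inductively would force $\tilde F^{-n}(z_2) - \tilde F^{-n}(z_1) \notin C^+(0)$ for every $n \geq 0$. But the cone inversion combined with the expansion hypothesis means the $s$-component of such differences is expanded backward by a factor $\lambda^{-1}$ each iterate, contradicting the fact that both orbits converge to $0$. The bound (\ref{eq:back-contr-bound}) is then immediate: applying (\ref{eq:expansion-in-cone}) iteratively to $(u, w(u)) \in C^+(0)$ gives $|\pi_u \tilde F^{-n}(u, w(u))| \leq \lambda^n |u|$, and the Lipschitz estimate yields
\[
\|\tilde F^{-n}(u, w(u))\| \leq \sqrt{1 + \mathcal{L}^2}\,|\pi_u \tilde F^{-n}(u, w(u))| \leq \sqrt{1 + \mathcal{L}^2}\,\lambda^n |u|.
\]
Smoothness of $w$ would follow by upgrading the same graph transform to a fiber contraction argument on jets (tangent cone conditions) of prescribed order, using that $\tilde F$ is analytic and the tangent expansion factor remains $\lambda^{-1}$.

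\textbf{Main obstacle.} The delicate point is promoting the pointwise expansion hypothesis (\ref{eq:expansion-in-cone}), which is only phrased relative to the origin, into an expansion statement for secants along graphs in $\mathcal{W}$ — this is what is actually used to make $\mathcal{T}$ a well-defined contraction and to drive the uniqueness argument. In the Zgliczy\'nski framework this typically amounts to a mean-value argument combining the cone condition with the smallness of $r$; verifying this carefully is where the bulk of the work lies.
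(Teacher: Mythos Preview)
Your approach is sound but takes a genuinely different route from the paper. The paper does not build $w$ from scratch via a graph transform; instead it invokes the classical local unstable manifold theorem at the hyperbolic fixed point $0$ to obtain a smooth local curve tangent to the $u$-axis (hence lying in $C^{+}(0)$ near the origin), and then iterates this curve forward under $\tilde F$. The cone condition guarantees each forward image remains an $\mathcal{L}$-Lipschitz graph over its $u$-projection, while the expansion hypothesis (\ref{eq:expansion-in-cone}) stretches that projection until it covers all of $J$. Smoothness therefore comes for free from the classical theorem rather than from a fiber-contraction upgrade, and the derivation of (\ref{eq:back-contr-bound}) is exactly the one you give. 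Your graph-transform construction is more self-contained and closer in spirit to the Zgliczy\'nski framework the paper cites, but it incurs precisely the cost you flag: the pointwise expansion (\ref{eq:expansion-in-cone}) must be promoted to a secant statement to make $u\mapsto\pi_u\tilde F(u,\varphi(u))$ an expanding homeomorphism, and your contraction estimate for $\mathcal{T}$ tacitly uses that $\tilde F$ contracts in the $s$-direction---true for the map at hand but not literally among the lemma's hypotheses. The paper's route sidesteps both issues by outsourcing existence and smoothness to the classical theory and using the cone hypothesis only to propagate the Lipschitz-graph property forward.
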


\begin{proof}
%\section{Proof of Lemma \protect\ref{lem:cone-cond}\label{app:cone-cond}}
Since $0$ is a hyperbolic fixed point of $\tilde{F}$, locally at the fixed
point the unstable manifold exists, is smooth, and tangent to the horizontal
axis, hence it is contained in $C^{+}\left( 0\right) $. Cone condition
together with (\ref{eq:expansion-in-cone}) ensure that the unstable manifold
is \corTypo{stretched }through $B$ to become a graph above $J$. Since locally, close to
zero, the unstable manifold is tangent to the horizontal axis it is a graph
of a function with the Lipschitz constant smaller than $\mathcal{L}$. This
property is preserved as the manifold is stretched throughout $B$ thanks to
the cone condition.

To show (\ref{eq:back-contr-bound}) note that for $z\in C^{+}\left( 0\right) 
$, since $\left\vert \pi_{s}z\right\vert <\mathcal{L}\left\vert \pi
_{u}z\right\vert $, we obtain $\left\Vert z\right\Vert \leq\sqrt{1+\mathcal{L}%
^{2}}\left\vert \pi_{u}z\right\vert $. Thus, from (\ref{eq:expansion-in-cone}%
),%
\begin{equation*}
\left\Vert z\right\Vert <\sqrt{1+\mathcal{L}^{2}}\left\vert
\pi_{u}z\right\vert <\sqrt{1+\mathcal{L}^{2}}\lambda\left\vert \pi_{u}%
\tilde {F}(z)\right\vert . 
\end{equation*}
Taking $z=\tilde{F}^{-n}\left(\corTypo{ u, w\left( u\right)} \right) $ and using (\ref%
{eq:expansion-in-cone}) we obtain%
\begin{equation*}
\left\Vert \tilde{F}^{-n}\left(\corTypo{u, w\left( u\right)} \right) \right\Vert <\sqrt{%
1+\mathcal{L}^{2}}\lambda\left\vert \pi_{u}\tilde{F}^{-n+1}(\corTypo{u,w\left( u\right)}
)\right\vert <\ldots<\sqrt{1+\mathcal{L}^{2}}\lambda^{n}\left\vert
u\right\vert , 
\end{equation*}
as required.
\end{proof}

In practice we can validate cone conditions and (\ref{eq:expansion-in-cone})
from the interval enclosure of the derivative of $\tilde{F}$ on $B$. %We give proofs of below lemmas in the appendix.

\begin{lemma}
\label{lem:cones-check-1}If $[D\tilde{F}\left(  B\right)  ]\left(
C^{+}\left(  0\right)  \right)  \subset C^{+}\left(  0\right)  $ then
$\tilde{F}$ satisfies cone conditions.
\end{lemma}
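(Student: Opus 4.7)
The plan is to verify the cone condition $\tilde{F}(C^+(z)\cap B)\subset C^+(\tilde{F}(z))$ pointwise by an interval-arithmetic version of the mean value theorem. Fix $z\in B$ and $v\in C^+(z)\cap B$; I want to show $\tilde{F}(v)-\tilde{F}(z)\in C^+(0)$, i.e.\ $C(\tilde{F}(v)-\tilde{F}(z))\geq 0$.

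First, I would observe that the cone $C^+(0)=\{w:\mathcal{L}|\pi_u w|\geq|\pi_s w|\}$ is symmetric under negation, so $v\in C^+(z)$ (meaning $C(z-v)\geq 0$) is equivalent to $v-z\in C^+(0)$. Next, since $B$ is a rectangle hence convex, the segment $\{z+t(v-z):t\in[0,1]\}$ lies entirely in $B$. The fundamental theorem of calculus gives
\begin{equation*}
\tilde{F}(v)-\tilde{F}(z)=A(v-z),\qquad A:=\int_0^1 D\tilde{F}\bigl(z+t(v-z)\bigr)\,dt.
\end{equation*}

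The key observation, and the only spot that needs a moment of care, is that the averaged Jacobian $A$ belongs to the interval enclosure $[D\tilde{F}(B)]$: each entry $A_{ij}=\int_0^1[D\tilde{F}(z+t(v-z))]_{ij}\,dt$ is a convex combination of values of $[D\tilde{F}]_{ij}$ on $B$, hence lies in the interval $[\inf_{B}[D\tilde{F}]_{ij},\sup_{B}[D\tilde{F}]_{ij}]$. Note that this step is the reason one cannot simply argue ``the integrand lies in $C^+(0)$, therefore so does the integral''—the cone $C^+(0)$ is not convex, so we must push the averaging onto the matrix side rather than onto the vector side.

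Finally, since $v-z\in C^+(0)$ and $A\in[D\tilde{F}(B)]$, the hypothesis $[D\tilde{F}(B)](C^+(0))\subset C^+(0)$ delivers $A(v-z)\in C^+(0)$, which is exactly $\tilde{F}(v)-\tilde{F}(z)\in C^+(0)$ and hence $\tilde{F}(v)\in C^+(\tilde{F}(z))$, completing the verification. I do not anticipate any serious obstacle; the proof is essentially a one-liner once one correctly interprets the interval-matrix statement and avoids the non-convexity trap mentioned above.
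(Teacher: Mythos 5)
Your proof is correct and takes essentially the same approach as the paper: both apply the fundamental theorem of calculus to write $\tilde{F}(v)-\tilde{F}(z)$ as an averaged Jacobian acting on $v-z$, note that the averaged Jacobian lies in the interval enclosure $[D\tilde{F}(B)]$, and then invoke the hypothesis. Your added remark that the averaging must be pushed to the (convex) matrix side because $C^+(0)$ itself is non-convex is a correct and useful clarification, though the paper leaves it implicit.
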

\begin{proof} Let $z\in B$ and $v\in C^{+}(z)\cap B$. Since $v-z\in C^{+}(0)$, from our
assumption it follows that 
\begin{align}
\tilde{F}(v)-\tilde{F}(z) & =\int_{0}^{1}\frac{d}{dt}\tilde{F}\left(
z+t\left( v-z\right) \right) dt  \label{eq:DF-integral} \\
& =\int_{0}^{1}D\tilde{F}\left( z+t\left( v-z\right) \right) dt\left(
v-z\right) \in\left[ DF\left( B\right) \right] \left( v-z\right) \subset
C^{+}(0),  \notag
\end{align}
hence $\tilde{F}\left( C^{+}(z)\right) \subset C^{+}(\tilde{F}(z))$, as
required.
\end{proof}

Above lemma is \corTypo{straightforward }to apply in interval arithmetic by checking that
\[[D\tilde{F}\left(  B\right)  ]\left(  \{1\}\times\left[  -\mathcal{L}%
,\mathcal{L}\right]  \right)  \subset C^{+}\left(  0\right)  .\]

\begin{lemma}
\label{lem:cones-check-2}Let $a_{11},a_{12},a_{21},a_{22}$ be real intervals
such that $[D\tilde{F}\left(  B\right)  ]=(a_{ij})_{i,j\in\left\{
1,2\right\}  }$. If $a_{11}-\mathcal{L}\left\vert a_{12}\right\vert
>\lambda^{-1}$ then (\ref{eq:expansion-in-cone}) is \corTypo{fulfilled}.
\end{lemma}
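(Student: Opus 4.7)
The plan is to express $\pi_u\tilde F(u,s)$ as a linear combination of $u$ and $s$ whose coefficients live in $a_{11}$ and $a_{12}$, and then exploit the cone inequality $|s|\leq \mathcal{L}|u|$ together with the triangle inequality.

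First I would note that the origin is a fixed point of $F$, hence of $\tilde F$, so $\tilde F(0)=0$. For any $z=(u,s)\in B$, since $B$ is a convex set containing the origin, the segment $\{(tu,ts):t\in[0,1]\}$ lies entirely in $B$. Applying the fundamental theorem of calculus along this segment gives
\[
\pi_u\tilde F(u,s)=\int_0^1\frac{d}{dt}\pi_u\tilde F(tu,ts)\,dt=\alpha\, u+\beta\, s,
\]
where $\alpha:=\int_0^1\partial_u\tilde F_1(tu,ts)\,dt$ and $\beta:=\int_0^1\partial_s\tilde F_1(tu,ts)\,dt$. Because the integrands are values of partial derivatives of $\tilde F$ at points of $B$, by definition of the interval enclosure $[D\tilde F(B)]$ we obtain $\alpha\in a_{11}$ and $\beta\in a_{12}$.

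Next, I would use the hypothesis. The inequality $a_{11}-\mathcal{L}|a_{12}|>\lambda^{-1}$ is to be read in interval arithmetic; since $\mathcal{L}|a_{12}|\geq 0$ and $\lambda^{-1}>1>0$, this forces every element of $a_{11}$ to be strictly larger than $\lambda^{-1}>0$, so in particular $\alpha>0$ and $|\alpha|=\alpha$. For $z=(u,s)\in C^+(0)$ we have $|s|\leq \mathcal{L}|u|$, and the triangle inequality yields
\[
|\pi_u\tilde F(u,s)|=|\alpha u+\beta s|\geq \alpha|u|-|\beta|\,|s|\geq (\alpha-\mathcal{L}|\beta|)|u|>\lambda^{-1}|u|,
\]
which is exactly \eqref{eq:expansion-in-cone}.

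There is no substantive obstacle; the only subtle point is interpreting the hypothesis correctly, namely that in interval arithmetic the inequality $a_{11}-\mathcal{L}|a_{12}|>\lambda^{-1}$ means $\inf\{a-\mathcal{L}|b|:a\in a_{11},\,b\in a_{12}\}>\lambda^{-1}$, which is precisely what is required to turn the derivative enclosure on $B$ into the pointwise expansion estimate in the cone. If the statement is to apply for every $z\in C^+(0)$ (unrestricted), one uses instead $z\in C^+(0)\cap B$, since otherwise $[D\tilde F(B)]$ no longer controls the derivative at $(tu,ts)$; this restriction is also the setting in which the result is used in the proof of Lemma \ref{lem:cone-cond}.
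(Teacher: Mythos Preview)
Your proof is correct and follows essentially the same approach as the paper: the paper invokes ``a mirror argument'' to the integral representation $\tilde F(v)-\tilde F(z)=\int_0^1 D\tilde F(z+t(v-z))\,dt\,(v-z)$ with $z=0$, obtaining $\pi_u\tilde F(u,s)\in\pi_u[D\tilde F(B)](u,s)$ and then the same triangle-inequality estimate. Your added remarks that $\alpha>0$ and that the argument really applies to $z\in C^+(0)\cap B$ are well taken; the paper's own proof likewise restricts to $(u,s)\in C^+(0)\cap B$.
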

\begin{proof} Let $\left( u,s\right) \in C^{+}(0)\cap B$. From a mirror argument to (\ref%
{eq:DF-integral}) and since $\left\vert s\right\vert \leq\mathcal{L}%
\left\vert u\right\vert ,$ 
\begin{equation*}
\left\vert \pi_{u}F\left( u,s\right) \right\vert \in\left\vert \pi _{u}\left[
DF\left( B\right) \right] \left( u,s\right) \right\vert \geq
a_{11}\left\vert u\right\vert -\mathcal{L}\left\vert a_{12}\right\vert
\left\vert u\right\vert >\lambda^{-1}\left\vert u\right\vert , 
\end{equation*}
as required.
\end{proof}

Using a computer program we compute an interval enclosure
 $[D\tilde{F}(B)]$. This enclosure is
used to validate, via Lemmas \ref{lem:cones-check-1} and
\ref{lem:cones-check-2}, the assumptions of Lemma \ref{lem:cone-cond}. This
way we obtain $w:J\rightarrow [-r\mathcal{L},r\mathcal{L}]$, and
define $P_u:J\to \mathbb{R}^2$ by
\[
P_{u}(x):=\mathcal{P}\left(  x,w(x)\right)  .
\]
Note that since $w(x)$ is Lipschitz with constant $\mathcal{L}$, $w(x)\in [-\mathcal{L}%
x,\mathcal{L}x]$, our method allows us to
obtain the explicit bound%
\[
P_{u}(x)\subset\mathcal{P}\left(  \{x\}\times\left[  -\mathcal{L}%
x,\mathcal{L}x\right]  \right), \qquad \mbox{for every } x\in J.
\]
Moreover, by Lemma \ref{lem:cone-cond} we know that $\frac{d}{dx}\left(
x,w(x)\right)  \in\{1\}\times\left[  -\mathcal{L},\mathcal{L}\right]  $, which
gives the bound on the derivative of $P_{u}$ as
\[
\frac{d}{dx}P_{u}(x)\subset\mathcal{P}\left(  \{1\}\times\left[
-\mathcal{L},\mathcal{L}\right]  \right) , \qquad \mbox{for every } x\in J .
\]
From (\ref{eq:back-contr-bound}) we also see that for every $x\in J$
\[
\left\Vert F^{-n}\left(  P_{u}(x)\right)  \right\Vert =\left\Vert
\mathcal{P}\tilde{F}^{-n}\left(  x,w(x)\right)  \right\Vert \leq\left\Vert
\mathcal{P}\right\Vert \lambda^{n}\sqrt{1+\mathcal{L}^{2}}\left\vert
x\right\vert \leq C\lambda^{n},
\]
for $C:=\left\Vert \mathcal{P}\right\Vert \sqrt{1+\mathcal{L}^{2}}r$; recall
that $J=\left[  -r,r\right]  $. We thus see that we have all the bounds for
$P_{u}$, which are required by section \ref{th:main-example}.

The function $P_{s}$ and associated bounds can be obtained the same way, by
considering $F^{-1}$ instead of $F$.

\begin{remark}[Parameterization method for invariant manifolds] \label{rem:parmMethod}\corTypo{
As $\alpha$ approaches zero in the standard map, the fixed point becomes
very weakly hyperbolic (eigenvalues approach unity) and hence the dynamics
near the fixed point becomes very slow.  As a result, homoclinic excursions take 
more time, and become difficult to track using only the linear approximation 
of the stable/unstable manifolds.   

To overcome these difficulties we can use -- for smaller $\alpha$ -- the approach 
developed in \cite{MR3068557}, a work which is itself based on the parameterization method of
\cite{Cabre1,Cabre2,MR2177465}.  Using the parameterization method
we compute high order Taylor expansions of the stable/unstable manifolds,
along with validated error bounds on all truncation errors. 
This strategy leads to a mathematically rigorous representation of the 
stable/unstable manifold  which is valid in a large 
neighbourhood of the fixed point.
We refer the interested reader to the book of \cite{Cana}
for much more complete discussion of the parameterization method.}
\end{remark}

%\input 05_2_param

%%%%%%%%%%%

%TCIDATA{Version=5.00.0.2606}
%TCIDATA{LaTeXparent=0,0,MMFedit.tex}

\section{Proof of Lemma \protect\ref{lem:manifolds-intersect-standard}\label{app:trans}}

We will show that the tangent lines to $W_{0}^{u}\left( F\right) $ and $%
W_{0}^{s}\left( F\right) $ at the intersection point $v_{M}^{\ast}$ span $%
\mathbb{R}^{2}.$ Note that $v_{M}^{\ast}=F^{M}\left( P_{u}\left( x^{\ast
}\right) \right) $. \corTypo{Defining inductively the sequence of vectors $w_0,\ldots,w_{M-1}\in\mathbb{R}^{2}$ as }$w_{0}:=DP_{u}\left( x^{\ast}\right) 
$ and $w_{k}:=DF\left( v_{k}^{\ast}\right) w_{k-1}$, we
see that 
\begin{equation*}
\frac{d}{dx}F^{M}\left( P_{u}\left( x\right) \right) |_{x=x^{\ast}}=w_{M-1}. 
\end{equation*}
If $w_{M-1}$ was \corTypo{collinear }with $\frac{d}{dy}P_{s}\left( y\right)
|_{y=y^{\ast}}$, then there would exist an $\alpha\neq0$ for which $\frac {d%
}{dy}P_{s}\left( y^{\ast}\right) =\alpha w_{M-1}$. Taking the vector $%
V=\left( 1,w_{0},\ldots,w_{M-1},1/\alpha\right) $ would lead to 
\begin{equation*}
D\mathcal{F}\left( p^{\ast}\right) V=0. 
\end{equation*}
This is a contradiction, since if $p^{\ast}$ is validated by the use of
Theorem \ref{th:interval-Newton}, \corTypo{then }the matrix $D\mathcal{F}\left( p^{\ast
}\right) $ must be invertible.

\bibliographystyle{elsarticle-num} 
\bibliography{papers}

%% else use the following coding to input the bibitems directly in the
%% TeX file.

%\begin{thebibliography}{00}

%% \bibitem{label}
%% Text of bibliographic item

%\bibitem{}

%\end{thebibliography}
\end{document}